\newtheorem{lemma}{Lemma}[section]
\newtheorem{theorem}[lemma]{Theorem}
\newtheorem{corollary}[lemma]{Corollary}
\theoremstyle{definition}
\newtheorem*{definition}{Definition}
\theoremstyle{remark}
\newtheorem*{remark}{Remark}
\renewcommand{\epsilon}{\varepsilon}
\renewcommand{\phi}{\varphi}
\title[Antiholomorphic perturbations of Weierstrass Zeta functions]{Antiholomorphic perturbations of Weierstrass Zeta functions and Green's function on tori}
\author[K.~Bogdanov]{Konstantin Bogdanov}
\address{Jacobs University Bremen, Campus Ring 1, Bremen, 28759, Germany}
\email{k.bogdanov@jacobs-university.de} 
\author[K. Mamayusupov]{Khudoyor Mamayusupov}
\address{Jacobs University Bremen, Campus Ring 1, Bremen, 28759, Germany}
\email{k.mamayusupov@jacobs-university.de} 
\author[S.~Mukherjee]{Sabyasachi Mukherjee}
\address{Institute for Mathematical Sciences, Stony Brook University, NY, 11794, USA}
\email{sabya@math.stonybrook.edu} 
\author[D.~Schleicher]{Dierk Schleicher}
\address{Jacobs University Bremen, Campus Ring 1, Bremen, 28759, Germany}
\email{d.schleicher@jacobs-university.de} 
\subjclass[2010]{37F10, 37F20, 37F45, 30D05, 35J05}
\date{\today}
\begin{document}
\begin{abstract}
In \cite{BeEr}, Bergweiler and Eremenko computed the number of critical points of the Green's function on a torus  by investigating the dynamics of a certain family of antiholomorphic meromorphic functions on tori. They also observed that hyperbolic maps are dense in this family of meromorphic functions in a rather trivial way. In this paper, we study the parameter space of this family of meromorphic functions, which can be written as antiholomorphic perturbations of Weierstrass Zeta functions. On the one hand, we give a complete topological description of the hyperbolic components and their boundaries, and on the other hand, we show that these sets admit natural parametrizations by associated dynamical invariants. This settles a conjecture, made in \cite{LW}, on the topology of the regions in the upper half plane $\mathbb{H}$ where the number of critical points of the Green's function remains constant.
\end{abstract} 

\maketitle

\tableofcontents

\section{Introduction}

One of the most fundamental objects in the intersection of potential theory, complex analysis, and mathematical physics is the Green's function. For a proper sub-domain of the Riemann sphere, the Green's function is a fundamental solution of the Laplacian (a harmonic function with a logarithmic singularity at a prescribed point and vanishing on the boundary of the domain) \cite[\S 4.4]{Ra}. From a physical point of view, such a function represents the electric potential of a unit negative charge placed at a point in the interior of the domain. On a compact Riemann surface without boundary, in particular on a torus, there is no fundamental solution of the Laplacian. Indeed, if the surface has no boundary then the flux generated by a unit negative charge must be `absorbed' by some positive charge somewhere on the surface. Hence for a compact surface without boundary, the equation
\begin{equation*}
\Delta G(z,z')=-\delta (z -z')
\end{equation*}
(where $\delta$ is the dirac measure with singularity at $z=z'$) admits no solution. This can also be seen by a straightforward computation (involving Green's formula and integration by parts) as the integral of the left side of the equation over the surface is $0$ while the right side integrates to $-1$. This has the corresponding physical interpretation that for any potential function on a compact surface without boundary, the associated charge distribution must have total integral $0$. This observation leads one to the following generalized Laplace equation on a torus
\begin{equation}
\Delta G(z,z')=-\delta (z -z')+\frac{1}{\vert T\vert},
\label{green_defn}
\end{equation}
where $\vert T\vert$ denotes the area of a torus with respect to a flat metric.

Note that any solution of Equation \ref{green_defn} represents the electric potential of a charge distribution that assigns a unit negative charge at a prescribed point $z'$ and a constant positive charge $\frac{1}{\vert T\vert}$ everywhere else on the torus. By definition, a Green's function on a flat torus is a solution of Equation \ref{green_defn} subject to the normalization $\int_T G(z,z')dA=0$. A more detailed discussion on generalized Laplacian equations can be found in \cite[p. 354]{CH}.

By translation invariance of the Laplacian, we have that $G(z,z')=G(z-z',0)$. Hence it suffices to consider the Green's function $G(z):=G(z,0)$. On a flat torus $\mathbb{C}/\Lambda_\tau$ (where $\Lambda_\tau$ is the lattice generated by $1$ and $\tau$), $G(z)$ can be written explicitly in terms of certain elliptic functions:
\begin{equation}
G(z)=-\frac{1}{2\pi}\log\vert\theta_1(z)\vert+ \frac{(\mathrm{Im}\ z)^2}{2\ \mathrm{Im}\ \tau} +C(\tau),
\end{equation}
where $\theta_1$ is the first theta-function, and $C(\tau)$ is a constant. 

An interesting connection between the Green's function on a flat torus and the mean field equation on a flat torus was discovered a few years ago by Lin and Wang. Instead of formulating the mean field equation precisely, let us just mention that the mean field equation originates from prescribed curvature problems in differential geometry, and has intimate connection with statistical physics. We refer the readers to \cite{LW} (and the references therein) for a rigorous description of the mean field equation and recent progress on its connection with other areas of physics. In \cite[Theorem 1.1]{LW}, Lin and Wang showed that the question of existence of solutions of the mean field equation (on a flat torus) can be completely answered in terms of the number of critical points of the Green's function on the torus. They employed sophisticated PDE techniques to further show that the Green's function on a torus has either three or five critical points, and investigated moduli dependence of the structure of the critical set. 

A fresh and simpler approach was taken by Bergweiler and Eremenko in \cite{BeEr}, where they used techniques of complex dynamics to attack the problem of counting the number of critical points of the Green's function on a flat torus. They investigated a family of antiholomorphic meromorphic functions $\{\widetilde{g}_\tau\}_{\tau\in\mathbb{H}}$ on tori such that the critical points of the Green's function on the torus $\mathbb{C}/\Lambda_\tau$ (where $\Lambda_\tau$ is the lattice generated by $1$ and $\tau$) are precisely the fixed points of $\widetilde{g}_\tau$. The study of these critical points is then turned into a problem about the dynamics of $\widetilde{g}_\tau$ near these fixed points (where `dynamics" refers to iteration). Fundamental results from complex dynamics along with a careful analysis of the dynamical properties of $\widetilde{g}_\tau$ allow them to count the number of fixed points of $\widetilde{g}_\tau$. In this setting, studying moduli dependence of the critical set of the Green's function is equivalent to studying the topology of the parameter space of the family of antiholomorphic meromorphic functions $\{\widetilde{g}_\tau\}_{\tau\in\mathbb{H}}$. 

\begin{figure}[ht!]
\begin{center}
\includegraphics[scale=0.48]{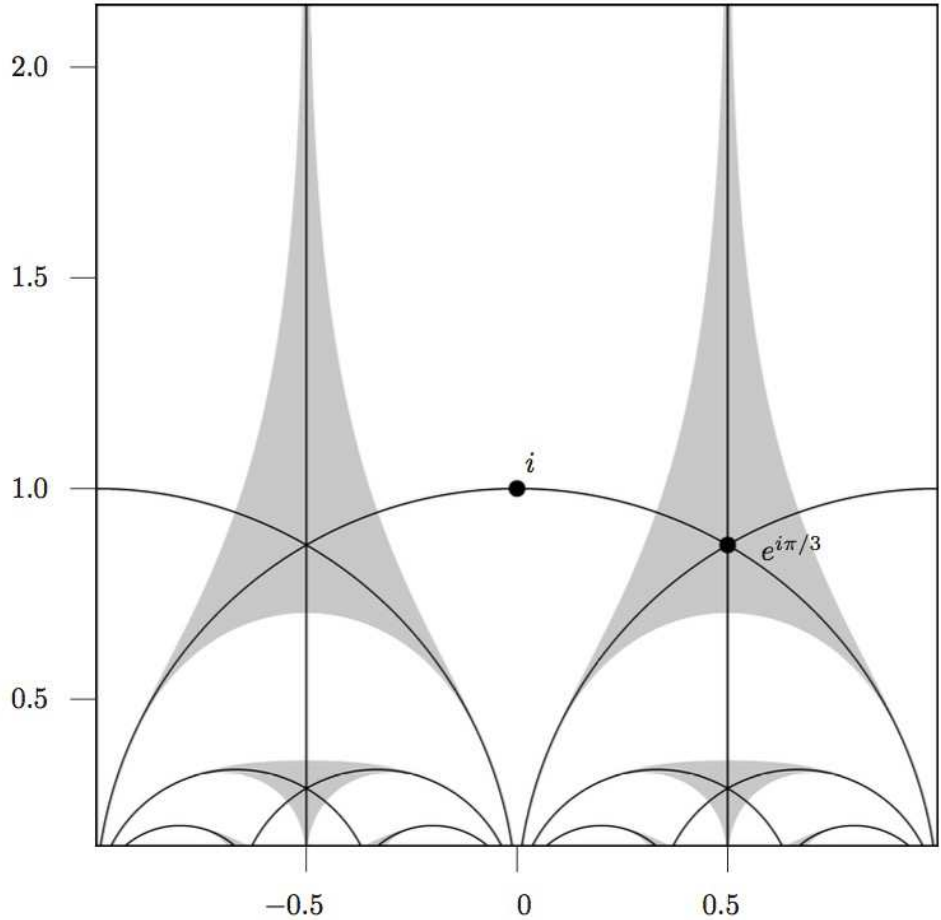}
\caption{The grey and white regions are the hyperbolic components in the parameter space of the family $\{\widetilde{g}_\tau\}_{\tau\in\mathbb{H}}$. Each hyperbolic component is unbounded and simply connected. The boundary of every hyperbolic component consists of two or three simple real-analytic arcs of double parabolic parameters, each of which stretches out to the boundary of $\mathbb{H}$ in both directions. (Figure courtesy Walter Bergweiler and Alexandre Eremenko.)}
\end{center}
\label{parameter_pic}
\end{figure}

A meromorphic map is called \emph{hyperbolic} if the orbit of each of its singular values (i.e. the set of all critical and asymptotic values) converges to an attracting periodic cycle. The map $\widetilde{g}_\tau$ has two singular values (both of which are critical values) counting multiplicities. The set of all parameters $\tau$ in the upper half-plane for which each of the two critical orbits of $\widetilde{g}_\tau$ converges to an attracting cycle is called the \emph{hyperbolic locus} in $\mathbb{H}$. A connected component of the hyperbolic locus is called a hyperbolic component. The importance of hyperbolic components stems from the fact that the qualitative behavior of the dynamics remains stable; i.e. depends continuously on the parameter throughout every hyperbolic component. It is remarkable that the hyperbolic locus of the family $\{\widetilde{g}_\tau\}_{\tau\in\mathbb{H}}$ is dense in the parameter plane $\mathbb{H}$ (in a rather trivial way), a property that is of intrinsic interest in the study of dynamical systems. 

The hyperbolic components of $\{\widetilde{g}_\tau\}_{\tau\in\mathbb{H}}$ are intimately related to the distribution of the number of critical points of Green's functions over the upper half plane $\mathbb{H}$. Following \cite{BeEr}, let $X$ be the set of parameters $\tau$ in $\mathbb{H}$ such that the Green's function on the torus $\mathbb{C}/\Lambda_\tau$ has exactly three critical points, and $Y$ be the set of parameters $\tau$ in $\mathbb{H}$ such that the Green's function on the torus $\mathbb{C}/\Lambda_\tau$ has exactly five critical points. This yields a partition of our parameter space $\mathbb{H}=X\sqcup Y$. It was proved in \cite{BeEr} that $X$ is a closed set in $\mathbb{H}$ (so $Y$ is open), and these two sets share a common boundary in the parameter plane $\mathbb{H}$: i.e. $\partial X=\partial Y$\footnote{Here, and in the rest of the article, for any subset $A$ of the parameter plane $\mathbb{H}$, we will denote the boundary of $A$ in $\mathbb{H}$ by $\partial A$.}. This partition has an important dynamical interpretation. For each $\tau\in Y$, the map $\widetilde{g}_\tau$ has exactly two attracting fixed points (the other three fixed points of $\widetilde{g}_\tau$ are repelling), and each attracting fixed point attracts a singular orbit. On the other hand, for each $\tau$ in $\textrm{int}(X)$ (respectively, on $\partial X$), $\widetilde{g}_\tau$ has a unique attracting fixed point (respectively, a unique parabolic fixed point), whereas the other two fixed points of $\widetilde{g}_\tau$ are repelling (see \cite[\S 8-11]{M1new} for a classification of fixed points for holomorphic maps). Moreover, if $\tau$ is in $\textrm{int}(X)$, then both the singular orbits of $\widetilde{g}_\tau$ converge to the unique attracting fixed point. Therefore, $\textrm{int}(X)\cup Y$ is the hyperbolic locus of our parameter space, and each hyperbolic component is a connected component of $\textrm{int}(X)$ or $Y$.

The principal goal of this paper is to study the topological properties of the hyperbolic components (and their boundaries) in the parameter space of $\{\widetilde{g}_\tau\}_{\tau\in\mathbb{H}}$. The main result is the following (compare Figure \ref{parameter_pic}).

\begin{theorem}[Topology of hyperbolic components and their boundaries]\label{hyperbolic}
Each hyperbolic component $H$ of $\{\widetilde{g}_\tau\}_{\tau\in\mathbb{H}}$ is unbounded and simply connected. The boundary of every hyperbolic component is a union of two or three (according as $H$ is a connected component of $\textrm{int}(X)$ or of $Y$) simple real-analytic arcs of parabolic parameters, each of which stretches out to the boundary of $\mathbb{H}$ in both directions. Furthermore, the hyperbolic components as well as their boundaries admit natural dynamical parametrizations.
\end{theorem}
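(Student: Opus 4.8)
The plan is to obtain all three conclusions at once by attaching to every hyperbolic component $H$ a \emph{dynamical coordinate} that identifies $H$ with a model space and identifies $\partial H$ with an explicit real-analytic multiplier locus. I would use three features of the family, all to be taken from \cite{BeEr} and the preliminary sections: the antiholomorphic involution $\iota\colon z\mapsto -z$ commutes with every $\widetilde g_\tau$, so the two critical values are $\iota$-interchanged and the three nonzero half-periods $\tfrac12,\tfrac\tau2,\tfrac{1+\tau}2$ — the $\iota$-fixed points of the torus other than the pole — are fixed points of $\widetilde g_\tau$ for every $\tau$; the Julia set has zero area and there are no wandering or Baker domains, so on the hyperbolic locus the Fatou set is exactly the union of attracting basins; and $\widetilde g_\tau$ depends real-analytically — but \emph{not} holomorphically — on $\tau$, the only non-holomorphic ingredient being the area factor $\mathrm{Im}\,\tau$. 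The first step would be the usual ``no bifurcation'' argument: on $H$ the attracting cycle(s) persist and move real-analytically by the implicit function theorem, the repelling cycles and hence all of $J(\widetilde g_\tau)$ move by a real-analytic motion, and this motion extends — over the varying tori $\mathbb C/\Lambda_\tau$, after precomposing with Teichmüller maps between them — to a quasiconformal conjugacy between nearby maps that respects the $\iota$-symmetry and the marking of the critical values. Hence any two maps in $H$ are quasiconformally conjugate; since $J$ has zero area, a conjugacy that is conformal off $J$ is globally conformal, and a conformal conjugacy forces the parameters to agree (up to the translation $\tau\mapsto\tau+1$, which is irrelevant inside one component).

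Next I would set up the internal parametrization. Using $\iota$, reduce to a single attracting fixed point $p(\tau)$ — the $\iota$-fixed half-period if $H\subset\mathrm{int}(X)$, one point of the $\iota$-orbit $\{q,-q\}$ if $H\subset Y$ — carrying the unique critical value $c(\tau)$ in its immediate basin $B_\tau$. Linearizing the antiholomorphic germ of $\widetilde g_\tau$ at $p(\tau)$ (conjugate to $w\mapsto|\lambda(\tau)|\,\overline w$ when $0<|\lambda(\tau)|<1$, with a super-attracting normal form $w\mapsto\overline w^{\,d}$ otherwise) and spreading the coordinate by the dynamics exhibits $\bigl(B_\tau\setminus\{p(\tau)\}\bigr)/\widetilde g_\tau$ as a finite-type Riemann surface together with a distinguished point coming from $c(\tau)$; by the previous paragraph, its conformal type with marked point is a complete invariant of $\widetilde g_\tau$ inside $H$, and recording it defines $\Phi_H\colon H\to\mathcal M_H$, with $\mathcal M_H$ the associated Teichmüller-type model space of pointed surfaces — a contractible, unbounded real-analytic manifold. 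Injectivity of $\Phi_H$ is the rigidity step; surjectivity would be a quasiconformal surgery, gluing the model return map into the basin of a base map $\widetilde g_{\tau_0}$, obtaining a quasiregular antiholomorphic self-map of a torus with one pole, averaging the invariant Beltrami form, straightening by the measurable Riemann mapping theorem, and invoking the classification of antiholomorphic meromorphic self-maps of tori with one pole and two critical values (from the preliminary sections) to recognize the straightened map as some $\widetilde g_\tau$; openness of $\Phi_H$ comes from the same surgery. This makes $\Phi_H$ a real-analytic diffeomorphism onto $\mathcal M_H$; in particular $H$ is simply connected, and, $\Phi_H$ being proper with $\mathcal M_H$ unbounded, $H$ is unbounded in $\mathbb H$ — the only non-formal input to properness being the degeneration of the internal data as one approaches $\partial\mathbb H$, which one reads off from the explicit degenerations of $\widetilde g_\tau$ (at $\mathrm{Im}\,\tau\to\infty$, $\zeta_\tau(z)-\eta_1(\tau)z$ degenerates to $\pi\cot\pi z$ on the cylinder $\mathbb C/\mathbb Z$; as $\tau\to\mathbb R$, the torus pinches to a node).

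For the boundary, recall from the Introduction that $\partial H\subset\partial X=\partial Y$, consisting of the ``double parabolic'' parameters: those $\tau$ for which some half-period $h_j$ has multiplier of modulus one, the multiplier being $\lambda_{h_j}(\tau)=1-\tfrac{\mathrm{Im}\,\tau}{\pi}\bigl(e_j(\tau)+\eta_1(\tau)\bigr)$ with $e_1=\wp_\tau(\tfrac12),\,e_2=\wp_\tau(\tfrac\tau2),\,e_3=\wp_\tau(\tfrac{1+\tau}2)$. A direct computation gives $\det\mathrm{Hess}\,G(p)=\tfrac{1-|\lambda(p)|^2}{4(\mathrm{Im}\,\tau)^2}$ at a critical point $p$, and since $G$ is strictly subharmonic away from the pole (so every critical point is a minimum or a saddle) this shows $h_j$ is attracting exactly on $R_j:=\{\tau:|\lambda_{h_j}(\tau)|<1\}$, that $\mathrm{int}(X)=R_1\sqcup R_2\sqcup R_3$, and that $Y=\mathbb H\setminus\overline{R_1\cup R_2\cup R_3}$. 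A transversality statement ($d|\lambda_{h_j}|\neq0$ along $\{|\lambda_{h_j}|=1\}$, which I would deduce from the boundary extension of $\Phi_H$ or by unfolding the parabolic directly) shows each locus $\{|\lambda_{h_j}|=1\}$ is a disjoint union of smooth real-analytic arcs; reading off the combinatorics from the explicit formulas and the boundary degenerations then gives that a component of $\mathrm{int}(X)$ is bounded by exactly two arcs of the single locus $\{|\lambda_{h_j}|=1\}$, whereas a component of $Y$ borders all three loci $\{|\lambda_{h_1}|=1\}$, $\{|\lambda_{h_2}|=1\}$, $\{|\lambda_{h_3}|=1\}$ in exactly one arc each (the dynamical fact that the symmetric pair of attracting fixed points of a $Y$-map can, along the boundary, recombine with any one of the three half-periods into a parabolic point). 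That each arc is a proper line reaching $\partial\mathbb H$ in both directions follows from the cylinder degeneration at $\mathrm{Im}\,\tau\to\infty$ and the nodal degeneration at $\tau\to\mathbb R$, in neither of which can such an arc terminate; this also re-proves that $H$ is unbounded. The dynamical parametrization of $\partial H$ is then the restriction of $\Phi_H$ — equivalently, of the conformal position of the critical value in the attracting petal at $p(\tau)$ — to each boundary arc.

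The main obstacle will be the surjectivity of $\Phi_H$: one must realize \emph{every} admissible internal datum \emph{within this particular family}, which needs the surgery together with a rigidity/classification statement identifying the straightened map with some $\widetilde g_\tau$, and the delicate point is that, the family being only real-analytic (not holomorphic) in $\tau$, the familiar holomorphic-motion and ``$\Phi_H$ holomorphic $\Rightarrow$ open'' shortcuts are unavailable and the whole argument must be carried out in the real-analytic category, as in the study of the Tricorn and related antiholomorphic families. A secondary difficulty is the precise boundary combinatorics — verifying which configurations of colliding fixed points bound each component, so as to obtain the sharp two-versus-three count — together with the cusp-and-pinching analysis needed to see that every boundary arc runs all the way out to $\partial\mathbb H$.
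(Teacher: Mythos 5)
Your interior step contains a step that is actually false, not just unproved: the injectivity of $\Phi_H$. The invariant you record (the conformal type of $(B_\tau\setminus\{p(\tau)\})/\widetilde g_\tau$ with the marked critical value) is equivalent to the Koenigs ratio of the attracting fixed point, and this does \emph{not} separate points of $H$: in this family the Koenigs ratio map is a real-analytic branched cover of $\mathbb{D}$ of degree $2$ on components of $\mathrm{int}(X)$ and degree $3$ on components of $Y$, ramified only over the center. The reason your rigidity argument breaks is the last clause: a global conformal conjugacy between $\widetilde g_{\tau_1}$ and $\widetilde g_{\tau_2}$ only forces $\tau_1,\tau_2$ to lie in the same orbit of the full modular action (not merely $\tau\mapsto\tau+1$), and such symmetries can preserve a single hyperbolic component and act on it nontrivially; equivalently, the germ-plus-critical-orbit data forgets which of the repelling boundary cycles (the three half-periods for $Y$, the two repelling $2$-cycles for $\mathrm{int}(X)$) the basin is attached to. The paper's route avoids exactly this: it conjugates the basin dynamics to a \emph{normalized} odd Blaschke product (degree $3$ for $\mathrm{int}(X)$, degree $2$ for $Y$), shows the resulting map $\eta\colon H\to\mathcal{B}_i$ is a covering onto a simply connected model space, and concludes $H\cong\mathcal{B}_i\cong\mathbb{D}$; simple connectedness comes from the covering-space argument, not from injectivity of the Koenigs data. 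Your further inference that properness of $\Phi_H$ onto an ``unbounded'' abstract model makes $H$ unbounded in $\mathbb{H}$ is a non sequitur: unboundedness of a model manifold says nothing about unboundedness of $H$ as a subset of the plane.

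For the boundary you have the right objects (unit-multiplier loci at the half-periods, the critical value's position in the petal as the boundary invariant), but the two hardest points are exactly the ones you defer: the transversality statement $d|\lambda_{h_j}|\neq 0$, and the claim that each arc is simple, runs to $\partial\mathbb{H}$ in both directions, and that the count is exactly two versus three, which you propose to ``read off from the explicit formulas.'' The paper proves these dynamically: first a petal-count lemma showing every boundary parameter is a double parabolic (four petals would force two distinct half-periods to collide, which is impossible), then a quasiconformal deformation in the antiholomorphic Fatou coordinate that changes the critical Ecalle height and produces through every boundary point a simple real-analytic arc of parabolic parameters parametrized by Ecalle height, and finally the observation that two arcs meeting or an arc accumulating inside $\mathbb{H}$ would create a parabolic point of multiplicity five, again contradicting the petal count; the $2$-versus-$3$ count then comes from which repelling cycle merges with the attracting point(s), that type being constant along each arc. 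Note that this Ecalle-height construction is also what delivers the last clause of the theorem (the dynamical parametrization of $\partial H$); your proposal names the correct invariant but establishes none of its properties, so as written the boundary half of the statement, and with it unboundedness of $H$, is not proved.
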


This proves a conjecture made by Lin and Wang \cite[\S1 p.915]{LW} regarding the topology of the regions in the upper half plane $\mathbb{H}$ where the number of critical points of the Green's function (on a flat torus) remains constant.\footnote{This has been independently proved in \cite{CLW} using much heavier machinery of non-linear PDE and modular forms.} As an application of Theorem \ref{hyperbolic}, we obtain the following result on the parameter dependence of the critical points of Green's function.

\begin{theorem}[Real-analyticity of critical points of Green's function]\label{critical_analytic}
The critical points of the Green's function are real-analytic functions of the parameter $\tau$ on the hyperbolic locus $\mathrm{int}(X)\cup Y$ and on its complement $\partial X$ (the parabolic locus) separately.
\end{theorem}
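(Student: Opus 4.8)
The plan is to derive Theorem~\ref{critical_analytic} from Theorem~\ref{hyperbolic} together with the real-analytic implicit function theorem, using that the critical points of the Green's function on $\mathbb{C}/\Lambda_\tau$ are precisely the fixed points of $\widetilde{g}_\tau$, and that $\widetilde{g}_\tau$ depends real-analytically on $\tau$ away from its pole: it is assembled from the Weierstrass $\zeta$-function $\zeta(\,\cdot\,;\tau)$, which is holomorphic in $\tau\in\mathbb{H}$, together with ingredients that depend real-analytically on $\mathrm{Im}\,\tau$, and it is antiholomorphic in the dynamical variable. Writing $\widetilde{g}_\tau(z)=\overline{h_\tau(z)}$ with $h_\tau$ holomorphic in $z$, the derivative of $\widetilde{g}_\tau-\mathrm{id}$ at a fixed point $z_0$, viewed as an $\mathbb{R}$-linear map, is $v\mapsto\bar\lambda\,\bar v-v$ with $\lambda=h_\tau'(z_0)$, and its determinant equals $1-|\lambda|^2$; equivalently, $\widetilde{g}_\tau^{\,2}$ is holomorphic near $z_0$ with multiplier $|\lambda|^2$. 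Hence a fixed point is a regular solution of the real-analytic equation $\widetilde{g}_\tau(z)=z$ exactly when it is attracting or repelling, i.e.\ when $|\lambda|\neq 1$.

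On the hyperbolic locus $\mathrm{int}(X)\cup Y$ this already concludes the argument: every fixed point of $\widetilde{g}_\tau$ is attracting or repelling, their number is locally constant ($3$ on $\mathrm{int}(X)$ and $5$ on $Y$), and at each of them the real-analytic implicit function theorem produces a local real-analytic branch $\tau\mapsto z_0(\tau)$. These branches exhaust the critical points of the Green's function, so the latter depend real-analytically on $\tau$ throughout $\mathrm{int}(X)\cup Y$.

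On the parabolic locus $\partial X$, the two repelling fixed points are treated exactly as above, so it remains to handle the unique parabolic fixed point $z_0(\tau)$, at which the linearized equation degenerates. The plan is to pass to the map $\widetilde{g}_\tau^{\,2}$, which is holomorphic in $z$ and real-analytic in $\tau$, and to exploit the local structure of the parabolic point furnished by Theorem~\ref{hyperbolic}: along each real-analytic arc of $\partial X$, the parabolic point is a zero of $\widetilde{g}_\tau^{\,2}(z)-z$ of a fixed multiplicity $m$ (here $m=3$, reflecting the two attracting petals of $\widetilde{g}_\tau^{\,2}$ that underlie the ``double parabolic'' terminology), so that $\partial_z^{\,m-1}\bigl(\widetilde{g}_\tau^{\,2}(z)-z\bigr)$ vanishes at $z_0(\tau_0)$ while its $z$-derivative does not. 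Applying the real-analytic implicit function theorem to this function of $(z,\tau)$ yields a real-analytic map $\zeta$, defined near $\tau_0$ in $\mathbb{H}$, with $\zeta(\tau_0)=z_0(\tau_0)$ and $\partial_z^{\,m-1}\bigl(\widetilde{g}_\tau^{\,2}(z)-z\bigr)$ vanishing at $z=\zeta(\tau)$. For $\tau\in\partial X$ near $\tau_0$ the parabolic point also satisfies this relation (being a zero of $\widetilde{g}_\tau^{\,2}(z)-z$ of multiplicity at least $m$) and depends continuously on $\tau$, hence equals $\zeta(\tau)$; restricting the real-analytic map $\zeta$ to the real-analytic arc $\partial X$ (Theorem~\ref{hyperbolic}) exhibits $z_0(\tau)$ as real-analytic along $\partial X$. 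As an alternative, one may invoke the natural dynamical parametrization of the arcs provided by Theorem~\ref{hyperbolic} directly, writing the parabolic fixed point as a real-analytic function of the parametrizing dynamical invariant, which itself varies real-analytically with $\tau$ on $\partial X$.

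The step I expect to be the main obstacle is the last one: identifying the precise local normal form of the parabolic fixed point along $\partial X$ — in particular confirming that it is a zero of $\widetilde{g}_\tau^{\,2}(z)-z$ of exact multiplicity $m$ (constant along each arc) and dealing with any parameters at which this degenerates further — and cleanly interfacing with the parametrization statement of Theorem~\ref{hyperbolic}. Once the appropriate $z$-derivative of $\widetilde{g}_\tau^{\,2}-\mathrm{id}$ is known to have a simple zero at the parabolic point, the real-analyticity follows formally from the implicit function theorem and the real-analyticity of $\partial X$.
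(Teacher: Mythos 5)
Your handling of the hyperbolic locus is the same as the paper's: the implicit function theorem at attracting/repelling fixed points, where the real Jacobian of $\widetilde{g}_\tau-\mathrm{id}$ equals $1-\vert\lambda\vert^2\neq 0$. On the parabolic locus your route is genuinely different. The paper deduces the statement from Lemma \ref{ThmParaArcs}: along each parabolic arc the maps are quasiconformal deformations of one another (varying the critical Ecalle height), the integrating maps depend real-analytically on the arc parameter and transport the parabolic fixed point, so the fixed points vary real-analytically on $\partial X$. You instead argue purely locally: since the parabolic point is a zero of $\widetilde{g}_\tau^{\circ 2}(z)-z$ of exact multiplicity $3$ for every $\tau\in\partial X$, it is a simple zero of $\partial_z^{2}\bigl(\widetilde{g}_\tau^{\circ 2}(z)-z\bigr)$, and the real-analytic implicit function theorem (applied to the lift $g_\tau$, jointly real-analytic in $(z,\tau)$) yields an ambient real-analytic function $\zeta(\tau)$ on a full neighborhood of $\tau_0$ in $\mathbb{H}$ whose restriction to $\partial X$ is the parabolic point. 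This buys a real-analytic extension beyond the arc and avoids any recourse to the deformation/Measurable Riemann Mapping machinery, whereas the paper gets its argument for free from the construction of the arcs. Note that the step you flag as the main obstacle is not one: the exact local form $z\mapsto z+z^{3}+\ldots$ (hence constant multiplicity $3$, with the multiplicity-$5$ degeneration excluded because distinct half-periods cannot collide) is precisely Lemma \ref{indiff}, valid at every point of $\partial Y=\partial X$, so you may cite it directly rather than extract it from Theorem \ref{hyperbolic}. Two small points to tighten: the identification $z_0(\tau)=\zeta(\tau)$ uses continuity of the parabolic point along $\partial X$, which you assert but should justify --- e.g.\ the other two fixed points are simple, non-indifferent zeros and persist as such under perturbation, so the multiple zero must stay near $z_0(\tau_0)$; alternatively, recall that for $\tau\in X$ the fixed points of $\widetilde{g}_\tau$ are exactly the three half-periods, and the half-period that becomes parabolic is constant along each arc, which makes the conclusion on $\partial X$ essentially immediate. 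Finally, ``real-analytic on $\partial X$'' should be read as real-analyticity along the real-analytic arcs furnished by Theorem \ref{hyperbolic}, which your restriction argument indeed delivers.
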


The paper is organized as follows. In Section \ref{attracting}, we recall how the problem of finding critical points of the Green's function can be turned into a fixed point problem for the antiholomorphic meromorphic maps $\widetilde{g}_\tau$. For an overwhelming majority of parameters in $\mathbb{H}$, the map $\widetilde{g}_\tau$ is hyperbolic, and we associate a conformal conjugacy invariant called \emph{Koenigs ratio} with each hyperbolic map. In Section \ref{interior}, we prove that every hyperbolic component is simply connected, and we use the Koenigs ratio map to give a dynamically natural parametrization of the hyperbolic components. We begin Section \ref{boundary_hyp} with a brief survey of some known facts about parabolic maps in antiholomorphic dynamics. This is one aspect where antiholomorphic dynamics differs from holomorphic dynamics in a rather subtle way. Finally, we use these `parabolic tools' to describe the topology of the boundaries of hyperbolic components (in our parameter space), and show that they admit dynamical parametrizations in terms of suitable conformal invariants. 

It is worth mentioning that antiholomorphic dynamics and associated parameter spaces have been extensively studied in \cite{Na1,NS,HS,MNS,IM1,IM2}.

We would like to thank Alexandre Eremenko and Walter Bergweiler for introducing us to the problem, and for various helpful discussions. Thanks are also due to them for allowing us to reproduce figures from their paper \cite{BeEr}. The first two authors gratefully acknowledge the support of Deutsche Forschungsgemeinschaft DFG during this work.

\section{The family $\widetilde{g}_\tau$, and Koenigs Ratio}\label{attracting}
Let us spend a few words on the connection between the Green's function on a torus and the antiholomorphic meromorphic function $\widetilde{g}_\tau$. By \cite{LW}, the critical points of the Green's function of the torus $\mathbb{C}/\Lambda_\tau$ (where $\Lambda_\tau$ is the lattice generated by $1$ and $\tau$) are solutions of:
\begin{eqnarray}\label{green}
\zeta(z)+az+b\overline{z}=0\ \textrm{(mod}\ \Lambda_\tau),
\end{eqnarray} 
where $\zeta$ is the Weierstrass Zeta function (a meromorphic function on $\mathbb{C}$) corresponding to the lattice $\Lambda_\tau$, and $a$ and $b$ are constants that are uniquely determined by the condition that the left side of Equation (\ref{green}) is $\Lambda_\tau$-periodic. The question of finding solutions of this equation can be turned into a fixed point problem by writing it as:
\begin{eqnarray}\label{fixed}
-\frac{1}{\overline{b}}\left(\overline{\zeta(z)}+\overline{az}\right)=z\ \textrm{(mod}\ \Lambda_\tau).
\end{eqnarray}
This leads us to the antiholomorphic meromorphic map $g_\tau(z):= -\frac{1}{\overline{b}}\left(\overline{\zeta(z)}+\overline{az}\right)$. Therefore, the critical points of the Green's function on $\mathbb{C}/\Lambda_\tau$ are precisely the fixed points of $g_\tau$, modulo $\Lambda_\tau$. The choice of $a$ and $b$ guarantees that 
\begin{eqnarray}\label{commute}
g_\tau(z+\omega)=g_\tau(z)+\omega,\ \forall\ \omega\in\Lambda_\tau.
\end{eqnarray}
However, $g_\tau$ has poles. Let $P_0$ be the set of poles of $g_\tau(z)$. To obtain a dynamical system; i.e. so that we can iterate the map $g_\tau$, we consider the open set $\displaystyle F:=\mathbb{C}\setminus\overline{\bigcup_{n=0}^\infty g_\tau^{-n}(P_0)}$. Thus $g_\tau:F\rightarrow F$ is a dynamical system. Since $g_\tau$ commutes with translations by elements of the lattice $\Lambda_{\tau}$, it descends to a map on the subset $\widetilde{F}$ (projection of $F$) of the torus $\mathbb{C}/\Lambda_\tau$, we call this torus map $\widetilde{g}_\tau$. Since $g_\tau$ is an odd function, $\widetilde{g}_\tau$ commutes with the holomorphic involution $z\mapsto -z$ of the torus. Note that the second iterate $\widetilde{g}_\tau^2$ of the antiholomorphic map $\widetilde{g}_\tau$ is holomorphic. According to \cite[Lemma 1, Lemma 2]{BeEr}, the holomorphic map $\widetilde{g}_\tau^2$ has no asymptotic values, and has two critical points $c_\tau$ and $-c_\tau$. Therefore, the orbits of the singular values of $\widetilde{g}_\tau$ are simply the post-critical orbits $\lbrace\widetilde{g}_\tau^{\circ n}(\pm c_\tau)\rbrace_{n\geq 1}$.

Let us now define a conformal conjugacy invariant for the hyperbolic parameters in our parameter plane. In Section \ref{interior}, we will make use of this conformal invariant to parametrize the hyperbolic components.

Let $H\subset\textrm{int}(X)$ be a hyperbolic component, i.e. for each $\tau \in H$, the map $\widetilde{g}_{\tau}$ has a unique attracting fixed point $z_{\tau}$ (which is a half-period of the lattice). Let us first assume that this attracting fixed point is not super-attracting. Then its immediate basin of attraction (the connected component of the basin of attraction containing the attracting fixed point) must contain a critical point, which is not the attracting point. By symmetry, the other critical point also belongs to the immediate basin. Thus both critical points $c_\tau$ and $-c_\tau$ belong to the immediate basin, and their forward orbits converge to the attracting fixed point $z_\tau$. For $\widetilde{h}_\tau := \widetilde{g}_{\tau}^{\circ 2}$, put $\widetilde{h}_{\tau}'(z_\tau) = \vert\lambda_\tau\vert^2>0$. Let $\kappa_\tau: U_\tau\to \mathbb{C}$ be a holomorphic linearizing (Koenigs) coordinate for $\widetilde{h}_\tau$ defined in $U_\tau$ so that $\kappa_\tau (\widetilde{h}_\tau(z))=\vert\lambda_\tau\vert^2 \kappa_\tau(z)$ (compare \cite[Theorem 8.2]{M1new}). In what follows, we will work with the critical point $c_\tau$. Note that the single critical orbit $\displaystyle\lbrace \widetilde{g}_{\tau}^{\circ n}(c_\tau)\rbrace_{n=1}^\infty$ of $\widetilde{g}_{\tau}$ splits into two critical orbits $\displaystyle\lbrace \widetilde{g}_{\tau}^{\circ 2n}(c_\tau)\rbrace_{n=1}^\infty$ and $\displaystyle\lbrace \widetilde{g}_{\tau}^{\circ 2n-1}(c_\tau)\rbrace_{n=1}^\infty$ of $\widetilde{h}_\tau$. We can look at two representatives of these two critical orbits of $\widetilde{h}_\tau$ in any fundamental domain (of the attracting fixed point), and take their ratio in the Koenigs coordinate. This defines for us a conformal conjugacy invariant:
\begin{align*}
\rho_H(\tau)
&:=\frac{\kappa_\tau(\widetilde{g}_{\tau}(c_\tau))}{\kappa_\tau(c_\tau)}\;.
\end{align*} 
If $\lambda_\tau=0$; i.e. if the unique attracting fixed point of $\widetilde{g}_{\tau}$ is super-attracting, we define $\rho_H(\tau)=0$. For every $\tau\in H$, this invariant $\rho_H(\tau)$ is called the Koenigs ratio of $\tau$.

For hyperbolic components in $Y$, a completely analogous construction is possible. For linearly attracting parameters in $Y$, the two distinct critical orbits (which are related by $-z$) of the antiholomorphic map $\widetilde{g}_{\tau}$ converge to two distinct attracting fixed points (which are related by $-z$ as well) of $\widetilde{g}_{\tau}$. Hence we can work in the immediate basin containing $c_\tau$, and define Koenigs ratio analogously in terms of a linearizing coordinate there. Finally, we define the Koenigs ratio of super-attracting parameters to be $0$.

This ratio is well-defined as the choice of Koenigs coordinate does not affect it. It is easy to verify that $\vert \rho_H(\tau)\vert=\vert\lambda_\tau\vert<1$, and $\vert \rho_H(\tau)\vert\to +1$ as $\tau\to \partial H$. Moreover, since the family of maps $\{\widetilde{g}_\tau\}$ depend real-analytically on the parameter $\tau$, the corresponding Koenigs coordinates $\kappa_\tau$ also depend real-analytically on $\tau$ throughout every hyperbolic component (compare \cite[Theorem 8.2, remark 8.3]{M1new}). Therefore, $\rho_H:H\to\mathbb{D}$ is a real-analytic map, which we will refer to as the \emph{Koenigs ratio} map (compare \cite[\S 6]{IM2}).

\begin{remark}
\begin{enumerate}
\item Our definition of Koenigs ratio agrees with the definition of the `critical value map' introduced in \cite{NS}.

\item Since the involution $z\mapsto -z$ respects the dynamics of  $\widetilde{g}_{\tau}$ , it is easy to see that working with the symmetric critical orbit $\lbrace \widetilde{g}_{\tau}^{\circ n}(-c_\tau)\rbrace$ would define the same map $\rho_H$.
\end{enumerate}
\end{remark}

\section{Simple-connectedness of Hyperbolic Components}\label{interior}

For $\tau\in \textrm{int}(X)\cup Y$, the immediate basins of attraction of every attracting fixed point of $g_{\tau}$ is simply connected. This is easy to see for the centers. In our context, centers are distinguished parameters for which both critical points $c_\tau$ and $-c_\tau$ are fixed by the map $\widetilde{g}_\tau$ (the existence of such parameters was remarked in \cite[p. 2919]{BeEr}). Indeed, in the super-attracting case, the fixed point is the only critical point of $g_{\tau}$ in the immediate basin, and hence the corresponding B{\"o}ttcher coordinate yields a biholomorphism between the immediate basin and the open unit disc (compare \cite[Theorem 9.3]{M1new}). A usual quasiconformal surgery trick (replacing the super-attracting dynamics by a linearly attracting dynamics) now shows that the topology of the Julia set remains stable throughout hyperbolic components (see \cite[\S 4.2]{BF} for the details of the method) . Therefore, for $\tau\in \textrm{int}(X)\cup Y$, the immediate basin of attraction of any fixed point of $g_{\tau}$ is simply connected.

For $\tau\in \textrm{int}(X)$, the restriction of $g_{\tau}$ to the immediate basin of attraction of the unique attracting fixed point is a degree $3$ proper antiholomorphic map (since the immediate basin either contains two simple critical points or a unique double critical point of $g_\tau$). Hence it is conjugate to (via the Riemann map of the immediate basin that sends the attracting fixed point to $0$) a Blaschke product of degree $3$. Since $g_{\tau}(-z)=-g_{\tau}(z)$, the Blaschke products obtained are odd functions; i.e. they are the form $B_{3,a}(z)=\lambda \overline{z}\frac{(\overline{z}-a)(\overline{z}+a)}{(1-\overline{az})(1+\overline{az})}$, with $a\in\mathbb{D}$ and $\vert\lambda\vert=1$, such that $z=1$ is fixed by $B_{3,a}$. The unique such Blaschke product with a super-attracting fixed point is $B_{3,0}$.

For $\tau\in Y$, we will work with the immediate basin of attraction containing the critical point $c_\tau$. The restriction of $g_{\tau}$ to this immediate basin of attraction is a degree $2$ proper antiholomorphic map (since the immediate basin contains a unique simple critical point of $g_\tau$). Hence it is conjugate to (via the Riemann map of the immediate basin that sends the attracting fixed point to $0$) a Blaschke product of degree $2$. Evidently, such a Blaschke product is of the form $B_{2,a}(z)=\lambda\overline{z}\frac{(\overline{z}-a)}{(1-\overline{az})}$, with $a\in\mathbb{D}$ and $\vert\lambda\vert=1$, such that $z=1$ is fixed by $B_{2,a}$. The unique such Blaschke product with a super-attracting fixed point is $B_{2,0}$. 

For $i=2, 3$, let $\mathcal{B}_i$ denote the space of Blaschke products of the form $B_{i,a}$ with $a\in\mathbb{D}$. A direct calculation (or Schwarz lemma) shows that $z=0$ is necessarily an attracting fixed point for every Blaschke product in $\mathcal{B}_i$ (for $i=2, 3$). Clearly, both Blaschke product spaces $\mathcal{B}_2$ and $\mathcal{B}_3$ are simply connected as their common parameter space is the open unit disc $\mathbb{D}$. The above discussion shows that we can associate a unique element of $\mathcal{B}_3$ (respectively, of $\mathcal{B}_2$) to every $g_\tau$ in a hyperbolic component $H$ contained in $\textrm{int}(X)$ (respectively in $Y$). We thus have a map $\eta$ from $H$ to $\mathcal{B}_2$ or $\mathcal{B}_3$. In the rest of this section, we will analyze the topological properties of the map $\eta$, which will in turn reflect on the topology of the hyperbolic components in our parameter space.

For both families of Blaschke products, we can define the Koenigs ratio of the attracting fixed point. The next lemma elucidates the mapping properties of the Koenigs ratio maps defined on $\mathcal{B}_i$, for $i=2,3$.

\begin{lemma}\label{Blaschke}
The Blaschke product model spaces $\mathcal{B}_2$ and $\mathcal{B}_3$ are simply connected. Moreover, the Koenigs ratio map of the attracting fixed point defines a real-analytic branched covering from $\mathcal{B}_2$ (respectively $\mathcal{B}_3$) onto $\mathbb{D}$ of degree $3$ (respectively, of degree $2$), ramified only over the origin.
\end{lemma}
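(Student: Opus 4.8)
The plan is to analyze the Koenigs ratio map on each $\mathcal{B}_i$ by brute parametrization, since these families are genuinely one-complex-parameter families over $a\in\mathbb{D}$. Simple-connectedness is immediate: both $\mathcal{B}_2$ and $\mathcal{B}_3$ are parametrized homeomorphically by $a\in\mathbb{D}$ (the condition that $z=1$ be fixed determines $\lambda$ uniquely from $a$, real-analytically, as a direct computation with $|B_{i,a}(1)|=1$ shows), so each is a copy of the disc. The substance of the lemma is the covering statement. First I would write down the multiplier of the attracting fixed point $0$: for $B_{3,a}$ it is $\overline{\lambda}\cdot$(the derivative factor), giving $|\lambda_a|$ as an explicit real-analytic function of $a$, with $|\lambda_a|\to 1$ as $|a|\to 1$ and $\lambda_0=0$; similarly for $B_{2,a}$. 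Then I would invoke the Koenigs linearization (\cite[Theorem 8.2]{M1new}) for the holomorphic second iterate, and express the Koenigs ratio as the ratio of two points on the (single, by the $z\mapsto -z$ symmetry) critical orbit in linearizing coordinates, exactly as in Section \ref{attracting}. Because everything depends real-analytically on $a$ and the Koenigs coordinate can be chosen to depend real-analytically on $a$ as well, the Koenigs ratio is a real-analytic map $\mathcal{B}_i\to\mathbb{D}$.

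The key step is computing the degree and locating the ramification. I would proceed by a properness-plus-covering argument rather than by explicitly inverting the formula. Properness: as $|a|\to 1$, the attracting multiplier satisfies $|\lambda_a|\to 1$, and since $|\rho|=|\lambda_a|$ (the same identity proved in Section \ref{attracting} for $\rho_H$, whose proof is purely local and applies verbatim here), the Koenigs ratio tends to $\partial\mathbb{D}$; hence the Koenigs ratio map extends to a proper map between the one-point-compactified discs (spheres), and a proper real-analytic map between surfaces of the same dimension is a branched covering. To get the degree I would count preimages of a generic point, or equivalently compute the degree on the boundary circle: on $\partial\mathbb{D}$ the map $a\mapsto\lambda_a$ already has a definite winding number (computable from the explicit formula for $\lambda_a$ in terms of $a$), but since $\rho$ also records the \emph{argument} coming from the relative position of the critical value, the total winding is larger — I expect winding $3$ for $\mathcal{B}_2$ and $2$ for $\mathcal{B}_3$, matching the statement. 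An alternative, cleaner route: use the fact that for a Blaschke product of degree $d$ with an attracting fixed point whose immediate basin is all of $\mathbb{D}$, the critical-orbit data form a transverse cross-section, and the branched-covering degree of the critical-value (Koenigs ratio) map equals $d-1$ for the off-symmetry count doubled appropriately — here the relevant count is $3=4-1$ for the quartic $B_{3,a}^{\circ 2}$-type structure and $2$ for the cubic $B_{2,a}^{\circ 2}$-type structure; I would phrase this via a degree argument on $B_{i,a}^{\circ 2}$ restricted to the invariant disc. Ramification only over $0$: the only critical point of the Koenigs ratio map can occur where the linearization degenerates, which happens precisely at the super-attracting parameter $a=0$ (where $B_{i,0}$ is $z\mapsto\lambda\overline{z}^{i-1}\cdot(\text{monomial})$ and Böttcher replaces Koenigs), and there $\rho=0$; away from $a=0$ one checks, using that the critical orbit is a simple non-stationary orbit in a genuine linearizing coordinate, that the derivative of $\rho$ is nonzero.

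The main obstacle I anticipate is making the ramification-only-over-$0$ claim rigorous without a messy explicit computation. The clean way is: suppose $d\rho$ vanishes at some $a_0\neq 0$; then $a_0$ is a critical point of a degree-$(i-1)$... (degree $3$ or $2$) branched covering onto $\mathbb{D}$, so the fiber over $\rho(a_0)$ has fewer than the generic number of points, which by properness forces a collision of preimages — but one can rule this out by a normalization/symmetry argument: the $z\mapsto-z$ symmetry of $B_{i,a}$ pins down the Koenigs coordinate up to scale, the remaining scaling is killed in the ratio, and the resulting formula for $\rho(a)$, when differentiated, is seen to be a nonconstant holomorphic (not merely real-analytic!) function of $a$ on $\mathbb{D}$ once one observes that the dependence of $B_{i,a}$ on $a$ is holomorphic after the antiholomorphic change of variable $z\mapsto\overline z$. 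That holomorphicity is the real point: it reduces the whole lemma to the statement that an explicit holomorphic self-map of the disc (the Koenigs ratio as a function of $a$), proper of degree $3$ or $2$, is a Blaschke product up to composition with automorphisms, and its only critical value is $0$ — which then follows from the Riemann–Hurwitz formula once we know it has a critical point of the right order at $a=0$ and no critical points on the boundary.
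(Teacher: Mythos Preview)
Your overall strategy --- parametrize $\mathcal{B}_i$ by $a\in\mathbb{D}$, establish properness of the Koenigs ratio map via $|\rho|=|\lambda_a|\to 1$, and then read off the degree and the ramification --- is different from the paper's, which cites \cite[Lemma~5.4]{NS} for $\mathcal{B}_2$ and for $\mathcal{B}_3$ argues by quasiconformal deformation that the Koenigs ratio is a covering $\mathcal{B}_3\setminus\{B_{3,0}\}\to\mathbb{D}^*$, computing the degree by counting the boundary fixed points of $B\in\mathcal{B}_3$ modulo the $z\mapsto -z$ identification (four fixed points, in two pairs, giving degree $2$). Your route could in principle work, and the simple-connectedness and properness steps are fine.

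The genuine gap is your ramification argument. You claim that after substituting $w=\overline z$ the family $B_{i,a}$ depends holomorphically on $a$, and then invoke Riemann--Hurwitz for a holomorphic self-map of $\mathbb{D}$. This is false: writing $B_{2,a}(z)=\lambda(a)\,w\,\dfrac{w-a}{1-\overline a\,w}$ with $w=\overline z$ and $\lambda(a)=\dfrac{1-\overline a}{1-a}$, the parameter $\overline a$ appears explicitly, and the same happens for $B_{3,a}$ (indeed, the constraint of being a self-map of $\mathbb{D}$ is a real-analytic, not a complex-analytic, condition on the coefficients). Concretely, already the critical point of $w\mapsto \lambda w(w-a)/(1-\overline a w)$ sits at $w=(1-\sqrt{1-|a|^2})/\overline a$, visibly non-holomorphic in $a$. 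So $\rho$ is only real-analytic in $a$, Riemann--Hurwitz is unavailable, and a proper real-analytic map $\mathbb{D}\to\mathbb{D}$ of degree $d$ can certainly have critical points away from a single fiber. Your fallback --- that critical points of $\rho$ can occur only where the Koenigs linearization degenerates --- is also not valid: the linearization is perfectly well defined for every $a\neq 0$, yet this says nothing about the $a$-derivative of $\rho$. What is actually needed is the covering property of $\rho$ on $\mathcal{B}_i\setminus\{B_{i,0}\}$, and this is exactly where the paper's quasiconformal surgery (producing a local inverse to $\rho$ near every non-super-attracting parameter, hence a covering onto $\mathbb{D}^*$) does the work your argument is missing.
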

\begin{proof}
Simple connectedness of $\mathcal{B}_2$, and the required properties of the Koenigs ratio map from $\mathcal{B}_2$ onto $\mathbb{D}$ follow from \cite[Lemma 5.4]{NS}.

Although the maps in $\mathcal{B}_3$ are bi-critical, the critical orbits are symmetric with respect to the holomorphic involution $z\mapsto -z$ (i.e. they commute with $-z$), and hence behave similar to maps with a unique free critical point. In particular, they have a unique attracting fixed point, and the orbits of both critical points converge to that attracting fixed point. Moreover, due to the symmetry, the critical points lie on the same equipotential. As a consequence, maps in $\mathcal{B}_3$ are uniquely determined by their associated Koenigs ratio, up to the choice of a fixed point on the unit circle. Note that every $B\in \mathcal{B}_3$ has four fixed points on the unit circle, but they are pairwise related by the involution $z\mapsto -z$. This leaves us with two degrees of freedom. One can now apply the quasiconformal deformation arguments as in the proof of \cite[Lemma 5.4]{NS} verbatim to demonstrate that for every connected component $V$ of $\mathcal{B}_3$, the Koenigs ratio map from $V\setminus \{B_{3,0}\}$ onto $\mathbb{D}^*$ is a covering of degree $2$. By the standard theory of covering maps, one now deduces that $V\setminus \{B_{3,0}\}$ must itself be homeomorphic to the punctured disc $\mathbb{D}^*$ with $B_{3,0}$ corresponding to the puncture. Since $\mathcal{B}_3$ has a unique super-attracting point, it follows that $\mathcal{B}_3$ has only one connected component. Therefore, $\mathcal{B}_3$ is homeomorphic to $\mathbb{D}$: i.e. it is simply connected, and the Koenigs ratio map is a real-analytic branched covering from $\mathcal{B}_3$ onto $\mathbb{D}$ of degree $2$ (since Koenigs ratio determines members of $\mathcal{B}_3$ up to the choice of one from two boundary fixed points), ramified only over the origin.
\end{proof}

\begin{lemma}\label{hyp_Blaschke}
Let $H$ be a hyperbolic component.
\begin{enumerate}
\item If $H\subset\textrm{int}(X)$, then $H$ is homeomorphic to the Blaschke product model space $\mathcal{B}_3$, and there exists a homeomorphism respecting the Koenigs ratio of the attracting fixed point. In particular, the Koenigs ratio map is a real-analytic two-fold cover of the unit disk, ramified only over the origin.

\item If $H\subset Y$, then $H$ is homeomorphic to the Blaschke product model space $\mathcal{B}_2$;  and there exists a homeomorphism respecting the Koenigs ratio of the attracting fixed point. In particular, the Koenigs ratio map is a real-analytic three-fold cover of the unit disk, ramified only over the origin.
\end{enumerate}
\end{lemma}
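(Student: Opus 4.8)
The plan is to show that the map $\eta$ from $H$ to the Blaschke model space $\mathcal{B}_i$ (throughout, $i=3$ if $H\subset\mathrm{int}(X)$ and $i=2$ if $H\subset Y$) is a homeomorphism that intertwines the two Koenigs ratio maps; the statements of the lemma then follow by composing with Lemma~\ref{Blaschke}. First, $\eta$ is real-analytic: the Riemann map of the immediate basin containing $c_\tau$, normalized to send the attracting (or super-attracting) fixed point to $0$ and a chosen prime end to $1$, depends real-analytically on $\tau$ throughout $H$ (continuous dependence of Riemann maps and of Koenigs coordinates on hyperbolic parameters, cf.\ \cite[Theorem~8.2, Remark~8.3]{M1new}), and conjugating $\widetilde{g}_\tau$ by it gives $\eta(\tau)\in\mathcal{B}_i$. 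By construction $\rho_H=\rho_{\mathcal{B}_i}\circ\eta$, where $\rho_{\mathcal{B}_i}$ is the Koenigs ratio map on the model space. Hence the whole lemma reduces to showing that $\eta$ is a homeomorphism.

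The core step is that $\eta$ is a local homeomorphism, proved by a quasiconformal surgery in the spirit of \cite[Lemma~5.4]{NS}: given $\tau_0\in H$ and a Blaschke product $B$ close to $\eta(\tau_0)$ in $\mathcal{B}_i$, one deforms the dynamics of $\widetilde{g}_{\tau_0}$ inside the immediate basin so that its Blaschke model becomes $B$ (replacing the B{\"o}ttcher coordinate by a Koenigs coordinate when $\tau_0$ is the super-attracting parameter, exactly as in the surgery recalled at the start of Section~\ref{interior}), pulls this new complex structure back by the iterates of $g_{\tau_0}$ to obtain a $g_{\tau_0}$-invariant Beltrami coefficient on $\mathbb{C}$ that is symmetric under $z\mapsto -z$ and $\Lambda_{\tau_0}$-periodic, and solves the Beltrami equation (see \cite[\S4.2]{BF} for this type of argument). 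Because $\{\widetilde{g}_\tau\}_{\tau\in\mathbb{H}}$ is a full one-complex-parameter family of antiholomorphic maps of precisely this combinatorial type — by Section~\ref{attracting}, $\widetilde{g}_\tau$ is determined up to an affine change of coordinates by $\tau$, matching the single complex modulus produced by the surgery — the straightened map is, after such a coordinate change, a member $\widetilde{g}_{\tau'}$ of the family with $\tau'\in H$ (since $B$ is close to $\eta(\tau_0)$) and $\eta(\tau')=B$; the normalization in the measurable Riemann mapping theorem makes $\tau'$ depend continuously and injectively on $B$. This gives local surjectivity and local injectivity of $\eta$.

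Next, $\eta$ is proper: by Lemma~\ref{Blaschke}, $\rho_{\mathcal{B}_i}\colon\mathcal{B}_i\to\mathbb{D}$ is proper, and by the properties of Koenigs ratio recalled in Section~\ref{attracting}, $\vert\rho_H(\tau)\vert\to 1$ as $\tau$ leaves every compact subset of $H$ — as $\tau\to\partial H$ the attracting cycle degenerates to a parabolic one, and as $\tau$ escapes towards $\partial\mathbb{H}$ the underlying tori degenerate — so $\eta^{-1}(K)$ is compact for every compact $K\subset\mathcal{B}_i$. A proper local homeomorphism onto a connected manifold is a finite covering map, so $\eta\colon H\to\mathcal{B}_i$ is a covering; since $\mathcal{B}_i$ is simply connected (Lemma~\ref{Blaschke}) and $H$ is connected, $\eta$ is a homeomorphism. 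In particular $H$ is simply connected, $\eta^{-1}(B_{i,0})$ is its unique center, and $\rho_H=\rho_{\mathcal{B}_i}\circ\eta$ is, by Lemma~\ref{Blaschke}, a real-analytic branched covering of $\mathbb{D}$ ramified only over the origin, of degree $2$ if $H\subset\mathrm{int}(X)$ and of degree $3$ if $H\subset Y$.

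The main obstacle is the local-homeomorphism step: one must verify that the surgery really stays inside the family — that the straightened map is genuinely some $\widetilde{g}_{\tau'}$, not merely a map with the same dynamical skeleton — which rests on matching the modulus of the surgery with $\dim_{\mathbb{C}}\mathbb{H}=1$ and on carrying the $z\mapsto -z$ symmetry and the lattice periodicity through the construction; and one must choose the prime-end normalizations (equivalently, one of the boundary fixed points paired by $z\mapsto -z$) coherently, which is exactly the bookkeeping that keeps the bi-critical case confined to $\mathcal{B}_3$ rather than a larger space. The properness step also conceals a point — the behaviour of $\rho_H$ as $\tau\to\partial\mathbb{H}$ — but the covering-space conclusion and the final degree count are then purely formal consequences of Lemma~\ref{Blaschke}.
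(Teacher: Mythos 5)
Your overall architecture is the paper's: you build the model map $\eta$ to $\mathcal{B}_2$ or $\mathcal{B}_3$, note $\rho_H=\rho_{\mathcal{B}_i}\circ\eta$, invert $\eta$ locally by the quasiconformal surgery of \cite[Theorem 5.6]{NS}, and finish by ``covering of the simply connected space $\mathcal{B}_i$, hence homeomorphism'', with the degrees then read off from Lemma~\ref{Blaschke}. Where you genuinely depart from the paper is the passage from local homeomorphism to covering map, and that is where your argument has a real gap: you deduce the covering property from properness of $\eta$, and your properness hinges on the claim that $\vert\rho_H(\tau)\vert\to 1$ along \emph{every} sequence leaving the compact subsets of $H$. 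For sequences converging to $\partial H$ inside $\mathbb{H}$ this is the multiplier statement recalled in Section~\ref{attracting} (the limits are parabolic). But at this stage nothing prevents $H$ from accumulating on $\partial\mathbb{H}$ --- indeed Theorem~\ref{hyperbolic} later shows every hyperbolic component is unbounded --- so you must also handle sequences in $H$ with $\mathrm{Im}\,\tau_n\to\infty$ or $\tau_n\to\mathbb{R}$. Your justification, ``the underlying tori degenerate,'' is not an argument: degeneration of the lattice says nothing by itself about the multiplier of the attracting fixed point; to prove $\vert\lambda_\tau\vert\to 1$ near the cusps one would need genuine asymptotics of $\zeta$, of $a$ and $b$, and of the relevant lattice invariants as $\tau$ degenerates, which you do not carry out. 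Worse, the statement you are invoking is (a posteriori) equivalent to the properness of $\rho_H$, which follows from the lemma itself via $\rho_H=\rho_{\mathcal{B}_i}\circ\eta$ with $\rho_{\mathcal{B}_i}$ proper; using it as an input is therefore essentially circular unless you establish it independently.

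The paper sidesteps this entirely: following \cite[Theorem 5.6]{NS}, the surgery step is what yields the covering property of $\eta$ directly --- the deformation of the complex structure on the attracting basin can be performed for targets ranging over the model space, not merely for Blaschke products close to $\eta(\tau_0)$ --- so no control of $\rho_H$ near $\partial\mathbb{H}$ is ever needed. To repair your write-up, either adopt that route (show the surgery provides inverse branches over a fixed evenly covered neighborhood of every point of $\mathcal{B}_i$, or a global section), or actually prove the asymptotic statement about the multiplier as $\tau\to\partial\mathbb{H}$ within $H$; as it stands, the properness paragraph asserts more than Section~\ref{attracting} contains. A smaller point in the same spirit: your claim that the normalized Riemann map of the immediate basin depends real-analytically on $\tau$ is not what \cite[Theorem 8.2, Remark 8.3]{M1new} gives (that reference concerns Koenigs coordinates); the paper argues continuity of $\eta$ through the Koenigs ratio instead, and your real-analyticity of $\eta$ is not needed anywhere in the proof.
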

\begin{proof}
The proof is similar to that of \cite[Theorem 5.6]{NS}, so we only give a sketch. 

\noindent \textbf{Case 1: \boldmath{$H\subset\textrm{int}(X)$}.}\quad
Let $\rho_3:\mathcal{B}_3\to\mathbb{D}$ be the Koenigs ratio map defined on the Blaschke product space $\mathcal{B}_3$. As mentioned earlier, we can associate a Blaschke product in $\mathcal{B}_3$ to each $\tau\in H$. This defines a map $\eta:H\to\mathcal{B}_3$. By construction, the attracting dynamics of $\widetilde{g}_\tau$ and that of the Blaschke product $\eta(\tau)\in\mathcal{B}_3$ are conformally conjugate. Since the Koenigs ratio is a conformal conjugacy invariant, it follows that $\eta$ respects the Koenigs ratio map; i.e. $\rho_H=\rho_3\circ\eta$. Moreover, the Koenigs ratio of the attracting fixed point changes continuously with the parameter in both families, and completely determines both $\tau$ in $H$ and $\eta(\tau)$ in $\mathbb{D}$. This shows that $\eta$ is continuous. Now the quasiconformal surgery step in the proof of \cite[Theorem 5.6]{NS} provides us with a local inverse of $\eta$, and shows that $\eta:H\to\mathcal{B}_3$ is a covering map. By Lemma \ref{Blaschke}, $\mathcal{B}_3$ is simply connected, so $\eta$ must be a homeomorphism. Therefore, $\eta$ is the required homeomorphism that respects the Koenigs ratio map.\\

\begin{figure*}
\begin{center}
\includegraphics[scale=0.32]{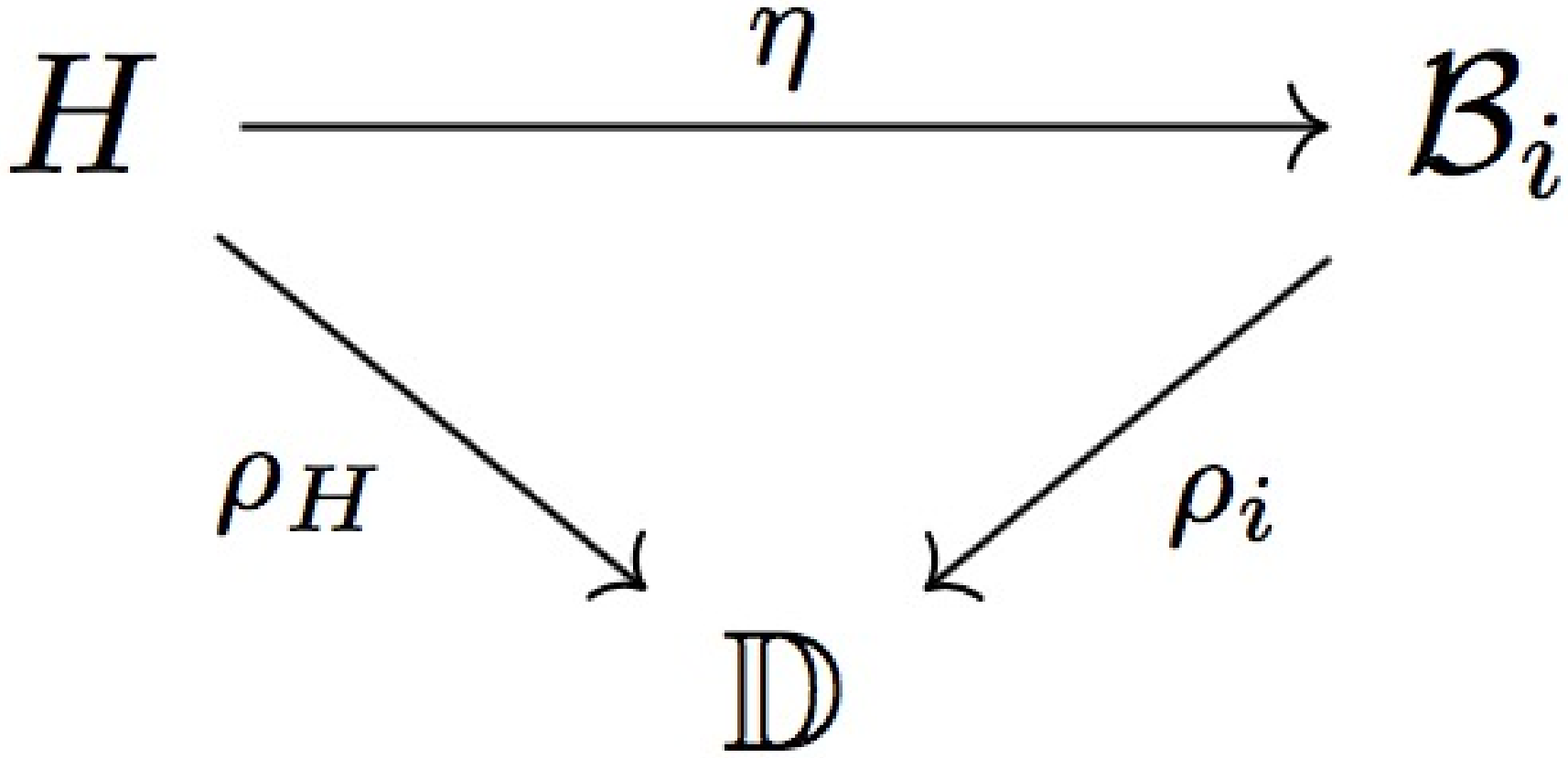}
\end{center}
\end{figure*}

\noindent \textbf{Case 2: \boldmath{$H\subset Y$}.}\quad
Let $\rho_2:\mathcal{B}_2\to\mathbb{D}$ be the Koenigs ratio map defined on the Blaschke product space $\mathcal{B}_2$. The proof goes exactly as in the previous case except that the map $\eta$ sends $H$ homeomorphically (and in a Koenigs ratio preserving manner) onto $\mathcal{B}_2$. 
\end{proof}

\begin{corollary}\label{simply_connected}
Every hyperbolic component is simply connected, and has a unique center. 
\end{corollary}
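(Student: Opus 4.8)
\emph{Proof proposal.} The plan is to deduce both assertions directly from Lemmas \ref{hyp_Blaschke} and \ref{Blaschke}, with essentially no further work. Let $H$ be a hyperbolic component. If $H\subset\textrm{int}(X)$, then by Lemma \ref{hyp_Blaschke}(1) there is a homeomorphism $\eta\colon H\to\mathcal{B}_3$; if instead $H\subset Y$, then by Lemma \ref{hyp_Blaschke}(2) there is a homeomorphism $\eta\colon H\to\mathcal{B}_2$. In either case Lemma \ref{Blaschke} tells us that the model space $\mathcal{B}_3$ (resp.\ $\mathcal{B}_2$) is simply connected, indeed homeomorphic to $\mathbb{D}$, and hence so is $H$. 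This settles the first claim.

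For the center, recall that a center of $H$ is a parameter $\tau\in H$ for which both critical points of $\widetilde{g}_\tau$ are fixed, equivalently for which the attracting fixed point is super-attracting, i.e.\ $\rho_H(\tau)=0$. Since the homeomorphism $\eta$ respects the Koenigs ratio, one has $\rho_H=\rho_i\circ\eta$ with $i\in\{2,3\}$, so a center of $H$ corresponds under $\eta$ precisely to a zero of $\rho_i$, i.e.\ to a super-attracting member of the Blaschke model space. By Lemma \ref{Blaschke} the Koenigs ratio map $\rho_i$ is a branched covering of $\mathbb{D}$ ramified only over the origin, and its unique ramification point is $B_{3,0}$ (resp.\ $B_{2,0}$), the unique super-attracting Blaschke product in the model space. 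Pulling back by $\eta$, we conclude that $H$ has exactly one center, namely $\eta^{-1}(B_{i,0})$; in particular a center exists because $\rho_i$, being a branched cover onto $\mathbb{D}$, is surjective.

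Since the substantive content — the quasiconformal surgery arguments behind the covering property of $\eta$, the identification and simple connectedness of the Blaschke model spaces, and the ramification structure of the Koenigs ratio maps — has already been established in the proofs of Lemmas \ref{Blaschke} and \ref{hyp_Blaschke}, I do not expect a genuine obstacle here; the corollary is a formal consequence. The only point meriting a line of care is that the fiber over $0$ of a degree-$d$ branched cover $\mathbb{D}\to\mathbb{D}$ ramified only over $0$ is a single point: this is because such a map restricts to an honest degree-$d$ covering $\mathbb{D}\setminus\{\ast\}\to\mathbb{D}^\ast$, and the classification of connected covers of $\mathbb{D}^\ast$ (whose fundamental group is $\mathbb{Z}$) forces $\ast$ to be the only point lying over $0$ — which is exactly the normalization already extracted in Lemma \ref{Blaschke} from the uniqueness of the super-attracting Blaschke product.
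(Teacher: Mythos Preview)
Your proposal is correct and matches the paper's approach: the corollary is stated there without proof, being an immediate consequence of Lemmas \ref{Blaschke} and \ref{hyp_Blaschke}, and you have simply spelled out the details of that deduction. The one-line justification you add for why the fiber over $0$ is a single point is accurate and in the spirit of the argument already given in the proof of Lemma \ref{Blaschke}.
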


\section{Boundaries of Hyperbolic Components}\label{boundary_hyp}

We now turn our attention to the boundaries of hyperbolic components in $\mathbb{H}$, namely the set $\partial X=\partial Y$. It was already noted in \cite{BeEr} that the parameter space of the maps $\{\widetilde{g}_\tau\}_{\tau\in\mathbb{H}}$ consists of hyperbolic components, and their common boundaries which are curves of parabolic parameters. Indeed, each parameter on the boundary of a hyperbolic component has an indifferent fixed point. The holomorphic multiplier of a fixed point of an antiholomorphic map is real and positive. Therefore, every indifferent fixed point of an antiholomorphic map must be parabolic with holomorphic multiplier $+1$. Hence, it suffices to discuss the local picture for parabolic fixed points. Note that every non-hyperbolic (necessarily parabolic in our case) parameter in $H$ can be approximated by points in $Y$. Moreover, as we approach such a boundary parabolic point from the interior of a hyperbolic component in $Y$, the two attracting points (which are related by $z\mapsto -z$) come together to merge with a repelling fixed point at a half-period. The resulting parabolic fixed point is thus a double parabolic fixed point of multiplier $+1$; i.e. it has two attracting petals each of which are fixed by $\widetilde{g}_\tau$. We will now rigorously prove this fact.

\begin{lemma}[Indifferent Dynamics of Odd Period]\label{indiff}
Let $\tau_0\in\partial Y$. Then $\widetilde{g}_{\tau_0}$ has a parabolic fixed point. In appropriate local conformal coordinates, the second iterate $\widetilde{g}_{\tau_0}^{\circ 2}$ has the form $z\mapsto z+z^3+\ldots$ near the parabolic fixed point. Moreover, both the attracting petals are fixed by $\widetilde{g}_{\tau_0}$.
\end{lemma}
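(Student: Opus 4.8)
The plan is to establish three things: that $\widetilde{g}_{\tau_0}$ has a parabolic fixed point at a half-period; that the local normal form of the holomorphic second iterate has the shape $z\mapsto z+z^3+\ldots$ (equivalently, that the parabolic point has parabolic multiplicity $2$, i.e. two attracting petals); and that the antiholomorphic map $\widetilde{g}_{\tau_0}$ itself fixes each of the two attracting petals rather than permuting them.

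\textbf{Existence of the parabolic fixed point.}\quad
First I would use the description of the partition $\mathbb{H}=X\sqcup Y$ recalled in the introduction together with the structural results of \cite{BeEr}. Since $\tau_0\in\partial Y=\partial X$, the map $\widetilde{g}_{\tau_0}$ is non-hyperbolic; as noted above, an indifferent fixed point of an antiholomorphic map has holomorphic multiplier $+1$ and is therefore parabolic, so it suffices to show $\widetilde{g}_{\tau_0}$ cannot have an attracting or (strictly) repelling fixed point configuration compatible with hyperbolicity. Approaching $\tau_0$ from within a component $H\subset Y$, the two attracting fixed points of $\widetilde{g}_\tau$ (which are exchanged by $z\mapsto -z$, hence distinct and \emph{not} half-periods) together with one of the three repelling fixed points (which \emph{is} a half-period, being a fixed point of the odd map $\widetilde{g}_\tau$ that is its own negative mod $\Lambda_\tau$) are the only candidates that can collide. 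By continuity of the fixed point set (the fixed points are solutions of the real-analytic equation $\widetilde{g}_\tau(z)=z$, equivalently of Equation~(\ref{fixed})) and the count ``five fixed points for $\tau\in Y$, three for $\tau\in\operatorname{int}(X)$'', at $\tau_0$ these three fixed points must merge into a single fixed point at the half-period, which is then parabolic with multiplier $+1$.

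\textbf{Parabolic multiplicity.}\qu
Next, working in a local holomorphic coordinate $w$ centred at the parabolic point (we may use the half-period itself as origin), write $\widetilde{g}_{\tau_0}$ as an antiholomorphic germ and set $h:=\widetilde{g}_{\tau_0}^{\circ 2}$, which is holomorphic with $h(w)=w+c_{k+1}w^{k+1}+\ldots$, $c_{k+1}\neq 0$, where $k$ is the parabolic multiplicity; the number of attracting petals equals $k$. The key point is the symmetry: since $\widetilde{g}_{\tau_0}$ commutes with $z\mapsto -z$ and the parabolic point is a fixed point of this involution, in a symmetric coordinate the germ $h$ is \emph{odd}, forcing $k$ to be even, so $k\geq 2$. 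To rule out $k\geq 4$ I would count: three fixed points of $\widetilde{g}_{\tau_0}$ collide at $\tau_0$, so the holomorphic fixed-point equation for $h$ gains exactly the corresponding multiplicity there. More precisely, the local degree of $w\mapsto h(w)-w$ at the parabolic point is $k+1$; under perturbation this germ has $k+1$ zeros near the origin, and these must account for exactly the fixed points of $\widetilde{g}_\tau^{\circ 2}$ that converge to the half-period, namely the three fixed points of $\widetilde{g}_\tau$ (two attracting, one repelling) that merge — giving $k+1=3$, hence $k=2$, i.e. the normal form $w\mapsto w+w^3+\ldots$ after rescaling. (One must be slightly careful that no additional period-$2$ orbits of $\widetilde{g}_\tau$ are hiding near the half-period; this follows because near a half-period the holomorphic map $\widetilde{g}_\tau^{\circ 2}$ is close to the identity and its only nearby periodic points are fixed points.)

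\textbf{The petals are fixed, not swapped.}\qu
Finally, for the last assertion I would use that the $k=2$ attracting petals of $h=\widetilde{g}_{\tau_0}^{\circ 2}$ are arranged symmetrically around the parabolic point along the two repelling directions, and $\widetilde{g}_{\tau_0}$ permutes them (since it commutes with $h$ and fixes the parabolic point). There are only two possibilities: $\widetilde{g}_{\tau_0}$ fixes each petal or swaps the two. If it swapped them, then the dynamics of $\widetilde{g}_{\tau_0}$ on petals would be of ``even period'' type, and the merging attracting fixed points approaching from $H\subset Y$ would have to come from a \emph{single} attracting cycle — but for $\tau\in Y$ there are two \emph{distinct} attracting fixed points (not a $2$-cycle), each in its own petal's perturbation, and each is individually fixed by $\widetilde{g}_\tau$; passing to the limit, each petal is individually fixed by $\widetilde{g}_{\tau_0}$. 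Equivalently, one checks directly that in the odd normal form $\widetilde{g}_{\tau_0}(w)=\overline{w}+\ldots$ (the antiholomorphic square root of $w+w^3+\ldots$), the two attracting directions $w\in\mathbb{R}$ are preserved by $w\mapsto\overline{w}$.

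\textbf{Main obstacle.}\qu
The step I expect to be most delicate is the parabolic multiplicity count: one must rigorously match the algebraic multiplicity with which fixed points of $\widetilde{g}_\tau$ collide at $\tau_0$ against the local degree $k+1$ of $h(w)-w$, while ruling out the a priori possibility that the collision involves hidden period-$2$ points of $\widetilde{g}_\tau$ or that more than three fixed points funnel into the half-period. This requires combining the global fixed-point count from \cite{BeEr} with a careful local analysis of the perturbed germ, and is exactly the point where the oddness symmetry (giving $k$ even, hence the dichotomy $k=2$ versus $k\geq4$) does the essential work.
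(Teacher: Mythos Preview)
Your route to the parabolic multiplicity is different from the paper's and, as written, contains a real gap at precisely the point you flag. You try to pin down $k$ by equating $k+1$ with the number of fixed points of $h_\tau=\widetilde{g}_\tau^{\circ 2}$ that collapse onto the parabolic half-period as $\tau\to\tau_0$, and you claim this number is exactly three. The obstruction is to exclude period-$2$ orbits of $\widetilde{g}_\tau$ (which are also fixed points of $h_\tau$) from the collision. Your parenthetical justification---that $h_\tau$ is ``close to the identity'' near the half-period and hence ``its only nearby periodic points are fixed points''---does not address this: period-$2$ points of $\widetilde{g}_\tau$ \emph{are} fixed points of $h_\tau$, so the statement is tautological for $h_\tau$ and says nothing about whether those fixed points of $h_\tau$ are fixed or period-$2$ for $\widetilde{g}_\tau$. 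In particular, nothing you have written rules out a symmetric period-$2$ orbit $\{a_\tau,-a_\tau\}$ with $\widetilde{g}_\tau(a_\tau)=-a_\tau$ converging to the half-period and inflating the local count to $k+1=5$.

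The paper bypasses the collision count altogether. It bounds the number of attracting petals from above by the classical fact that every attracting petal of the holomorphic map $\widetilde{g}_{\tau_0}^{\circ 2}$ must contain an infinite critical orbit \cite[Theorem~10.15]{M1new}, and then invokes \cite[Lemma~2]{BeEr} to cap the number of such orbits at four. Together with the parity constraint from the $z\mapsto -z$ symmetry (which you also use), this leaves only $k\in\{2,4\}$; the case $k=4$ is then ruled out because it would force two distinct half-periods of the lattice to coalesce. The critical-orbit bound is the key idea you are missing; it gives an upper bound on $k$ that is independent of any delicate bookkeeping of which periodic points of $\widetilde{g}_\tau$ participate in the merger.

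A secondary correction: your ``direct'' argument that the petals are fixed is not right as stated. For the normal form $w\mapsto w+w^3+\cdots$ the attracting directions are along the \emph{imaginary} axis (where $w^2<0$), not the real axis, and $w\mapsto\overline{w}$ would swap them; more to the point, the leading term of the antiholomorphic germ is $\lambda\overline{w}$ with $|\lambda|=1$, not $\overline{w}$, so the computation needs more care. Your first argument for this step---that the two parabolic basins arise as limits of the two \emph{fixed} attracting basins of $\widetilde{g}_\tau$ for $\tau\in H\subset Y$---is correct and is exactly what the paper uses.
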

\begin{proof}
The proof is similar to \cite[Lemma 2.8]{MNS}. Let $H\subset Y$ be a hyperbolic component, and $\tau_0\in\partial H$. Since the parabolic fixed point has multiplier $1$, $\widetilde{g}_{\tau_0}^{\circ 2}$ maps each attracting petal into itself. Hence, each attracting petal contains at least one infinite critical orbit of $\widetilde{g}_{\tau_0}^{\circ 2}$ \cite[Theorem 10.15]{M1new}. However, by \cite[Lemma 2]{BeEr}, $\widetilde{g}_{\tau_0}^{\circ 2}$ has at most four infinite critical orbits. So there can be at most four attracting petals; i.e. $q\leq 5$ (where $q$ is the multiplicity of the parabolic fixed point). But $\widetilde{g}_\tau^{\circ 2}$ commutes with $z\mapsto -z$, and hence we must have an even number of attracting petals. Therefore, the number of attracting petals is either $2$ or $4$; i.e. $q\in\{3,5\}$. 

Now note that for every $\tau\in H$, there are three (repelling) fixed points $\{p_\tau^1, p_\tau^2, p_\tau^3\}$ on the boundary of the immediate basin of attraction $U$ (containing the critical point $c_\tau$) of $\widetilde{g}_{\tau}$. Moreover, these repelling fixed points (on the boundary of the immediate basin of attraction) are half-periods of the corresponding lattice. The parabolic fixed point of $\widetilde{g}_{\tau_0}$ is formed by the merger of the two attracting fixed points and repelling fixed points of $\widetilde{g}_{\tau}$, as $\tau$ tends to $\tau_0$ from the interior of $H$. In order to have a parabolic fixed point with four attracting petals, two distinct repelling fixed points must merge in the dynamical plane of $\widetilde{g}_{\tau_0}$. But this is impossible since two distinct half-periods can never coincide for a lattice. This rules out the case $q=5$. Finally, since the two immediate parabolic basins of $\widetilde{g}_{\tau_0}$ are limits of two fixed basins of attraction of $\widetilde{g}_{\tau}$ (as $\tau$ tends to $\tau_0$ from the interior of $H$), it follows that these petals are fixed by $\widetilde{g}_{\tau_0}$.
\end{proof}

In holomorphic dynamics, the local dynamics in attracting petals of parabolic periodic points is well-understood: there is a local coordinate $\zeta$ which conjugates the first-return dynamics to the form $\zeta\mapsto\zeta+1$ in a right half plane \cite[Section~10]{M1new}. Such a coordinate $\zeta$ is called a \emph{Fatou coordinate}. Thus the quotient of the petal by the dynamics is isomorphic to a bi-infinite cylinder, called an \emph{Ecalle cylinder}. Note that Fatou coordinates are uniquely determined up to addition of a complex constant. 

In antiholomorphic dynamics, an additional structure is given by the antiholomorphic intermediate iterate, and there is a natural choice for a preferred Fatou coordinate. 

\begin{lemma}[Antiholomorphic Fatou Coordinates]\label{PropFatouCoord} 
Let $\tau\in\partial Y$, and $z_0$ be a double parabolic fixed point of $\widetilde{g}_\tau$. Let $U$ be the fixed Fatou component of $\widetilde{g}_\tau$ containing $c_\tau$ such that $z_0 \in \partial U$. Then there is an open subset 
$V \subset U$ with $z_0 \in \partial V$ and $\widetilde{g}_\tau (V) \subset V$ so 
that for every $z \in U$, there is an $n \in \mathbb{N}$ with $\widetilde{g}_\tau^{\circ n}(z)\in 
V$. Moreover, there is a univalent map $\Phi_\tau \colon V \to \mathbb{C}$ with 
$\Phi_\tau(\widetilde{g}_\tau(z)) = \overline{\Phi_\tau(z)}+1/2$, and $\Phi_\tau(V)$ contains a right 
half plane. This map $\Phi_\tau$ is unique up to horizontal translation. 
\end{lemma}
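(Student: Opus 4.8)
The plan is to reduce the antiholomorphic statement to the classical holomorphic theory of Fatou coordinates for $\widetilde h_\tau := \widetilde g_\tau^{\circ 2}$, and then use the antiholomorphic intermediate iterate $\widetilde g_\tau$ to single out a preferred normalization. By Lemma~\ref{indiff}, near $z_0$ the holomorphic map $\widetilde h_\tau$ has the form $z\mapsto z+z^3+\ldots$, so it has exactly two attracting petals, and both are fixed (as sets) by $\widetilde g_\tau$. Fix the petal $P\subset U$ containing (the tail of) the critical orbit through $c_\tau$. First I would invoke the Leau--Fatou flower theorem and the parabolic linearization theorem \cite[Section~10]{M1new} to obtain a univalent Fatou coordinate $\psi\colon P'\to\mathbb{C}$, defined on a possibly smaller forward-invariant petal $P'\subset P$ with $z_0\in\partial P'$, conjugating $\widetilde h_\tau$ to $\zeta\mapsto\zeta+1$ on a right half plane, and with the property that every point of $U$ eventually enters $P'$ under iteration of $\widetilde h_\tau$ (hence under $\widetilde g_\tau$, after possibly shrinking once more so that $\widetilde g_\tau(P')\subset P'$ — this is where I would take $V$ to be this shrunken petal; note $\widetilde g_\tau(V)\subset V$ is automatic once the petal is small and forward-invariant under the intermediate iterate, because $\widetilde g_\tau$ permutes the two petals and fixes each one).

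Next I would exploit antiholomorphy. Since $\widetilde g_\tau$ is antiholomorphic with $\widetilde g_\tau^{\circ 2}=\widetilde h_\tau$ and $\widetilde g_\tau(V)\subset V$, the map $\overline{\psi}\circ\widetilde g_\tau\circ(\text{conj})$ — more precisely, consider the composition $\zeta\mapsto \overline{\psi(\widetilde g_\tau(\psi^{-1}(\overline\zeta)))}$ — is a holomorphic map commuting with $\zeta\mapsto\zeta+1$ on a right half plane, and its square is $\zeta\mapsto\zeta+1$. Standard rigidity for germs commuting with the unit translation (any holomorphic self-map of a right half plane commuting with $T_1\colon\zeta\mapsto\zeta+1$ that has this form must be $\zeta\mapsto\zeta+c$ for a constant $c$) forces $\psi(\widetilde g_\tau(z))=\overline{\psi(z)}+c$ with $2\,\mathrm{Re}(c)=1$, i.e. $\mathrm{Re}(c)=1/2$. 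The imaginary part of $c$ can then be absorbed: replacing $\psi$ by $\psi+i\,\mathrm{Im}(c)/2$ (a vertical translation, which is still a Fatou coordinate for $\widetilde h_\tau$) changes $c$ to $c - 2i\,\mathrm{Im}(c)/2 + \ldots$; one checks the correct real additive/affine bookkeeping so that the resulting coordinate $\Phi_\tau$ satisfies exactly $\Phi_\tau(\widetilde g_\tau(z))=\overline{\Phi_\tau(z)}+1/2$. This pins down $\Phi_\tau$ up to the remaining freedom: Fatou coordinates for $\widetilde h_\tau$ are unique up to an additive complex constant $a$, and imposing the antiholomorphic functional equation kills $\mathrm{Im}(a)$ (the equation is not invariant under vertical translation) while leaving $\mathrm{Re}(a)$ free — hence $\Phi_\tau$ is unique up to horizontal translation, and $\Phi_\tau(V)$ contains a right half plane because $\psi(P')$ does and $\Phi_\tau$ differs from $\psi$ by a constant.

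The main obstacle I anticipate is the normalization bookkeeping in the second step: carefully tracking how the complex constant in the functional equation transforms under translations of the Fatou coordinate, and verifying that exactly the vertical-translation degree of freedom is consumed in order to achieve the clean additive constant $1/2$ (rather than some conjugate-dependent expression), while the horizontal one survives. This is genuinely the content of ``antiholomorphic Fatou coordinates'' as opposed to ordinary ones, and it is the place where one must use that the parabolic multiplier of the antiholomorphic map is $+1$ and that $\widetilde g_\tau$, not merely $\widetilde h_\tau$, fixes the petal. Everything else — existence and univalence of $\psi$, the attracting basin statement that every $z\in U$ eventually lands in $V$, and the rigidity of germs commuting with $T_1$ — is classical and can be quoted or dispatched quickly. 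I would also remark that an essentially identical statement and argument appear in the antiholomorphic (unicritical polynomial) setting in \cite{HS,MNS,IM1}, and the proof here is a verbatim adaptation; this lets me keep the write-up to a sketch rather than a full computation.
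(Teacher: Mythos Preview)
Your proposal is correct and follows exactly the approach the paper intends: the paper's own proof consists solely of the sentence ``The proof is similar to that of \cite[Lemma 2.3]{HS} or \cite[Lemma 3.1]{MNS}'', and what you have written is precisely a sketch of that cited argument (classical Fatou coordinate for the holomorphic second iterate, then rigidity of maps commuting with $T_1$ to force $\psi\circ\widetilde g_\tau=\overline{\psi}+c$ with $\mathrm{Re}\,c=1/2$, then a vertical shift to kill $\mathrm{Im}\,c$, leaving only horizontal freedom). Your identification of the normalization bookkeeping as the only non-formal step is also on target; the sign in your vertical shift is off (one needs $\psi\mapsto\psi-\tfrac{i}{2}\mathrm{Im}\,c$), but you correctly flag this as the place requiring care.
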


\begin{proof}
The proof is similar to that of \cite[Lemma 2.3]{HS} or \cite[Lemma 3.1]{MNS}. In fact, the arguments apply more generally to antiholomorphic indifferent periodic points such that the attracting petal has odd period.
\end{proof}

The map $\Phi_\tau$ will be called a  \emph{preferred Fatou coordinate} for 
the petal $V$; it satisfies $\Phi_\tau(\widetilde{g}_\tau^{\circ 2}(z))=\Phi_\tau(z)+1$ for $z \in V$ 
in accordance with the standard theory of holomorphic Fatou coordinates. 
 
The antiholomorphic iterate interchanges both ends of the Ecalle cylinder, so it must fix a horizontal line around this cylinder (the \emph{equator}). The change of coordinate has been so chosen that the equator is the projection of the real axis. We will call the vertical Fatou coordinate the \emph{Ecalle height}. Its origin is the equator. The Ecalle height of the critical value $\widetilde{g}_{\tau}(c_{\tau})$ is called the critical Ecalle height of the map $\widetilde{g}_\tau$. This is an important conformal conjugacy invariant of the parabolic maps under consideration.

\begin{lemma}[Parabolic Arcs]\label{ThmParaArcs} 
Let $\tau_0\in\partial Y$. Then $\tau_0$ is on a parabolic arc in the following sense: there  exists a simple real-analytic arc of parabolic
parameters $\tau(t)$ (for $t\in\mathbb{R}$) with quasi-conformally equivalent but conformally 
distinct dynamics of which $\tau_0$ is an interior point.
\end{lemma}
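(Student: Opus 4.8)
The plan is to realize the parabolic arc as the image of a one‑parameter family of quasiconformal deformations of $\widetilde g_{\tau_0}$, parametrized precisely by the critical Ecalle height. First I would recall from Lemma \ref{indiff} that $\widetilde g_{\tau_0}$ has a double parabolic fixed point $z_0$ whose second iterate has the form $z\mapsto z+z^3+\cdots$, with two attracting petals (each fixed by $\widetilde g_{\tau_0}$, not just by $\widetilde g_{\tau_0}^{\circ 2}$), and by Lemma \ref{PropFatouCoord} that on the petal $V$ containing $c_{\tau_0}$ there is a preferred antiholomorphic Fatou coordinate $\Phi_{\tau_0}$ conjugating $\widetilde g_{\tau_0}$ to $w\mapsto\overline w+1/2$; by the involutive symmetry $z\mapsto-z$ the second petal carries the $-z$‑image of this coordinate. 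The critical Ecalle height $h_0=\mathrm{Im}\,\Phi_{\tau_0}(\widetilde g_{\tau_0}(c_{\tau_0}))$ is then a well‑defined conformal conjugacy invariant.

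Next I would perform the deformation: for each $s\in\mathbb R$ let $T_s$ be an $\widetilde g_{\tau_0}^{\circ 2}$‑invariant Beltrami coefficient supported on the (grand) union of the two parabolic basins, obtained by pulling back, via the Fatou coordinates, the invariant Beltrami form on the Ecalle cylinder whose quotient realizes a vertical shift of the cylinder by (a suitable multiple of) $s$ — this is the standard ``stretching along the Ecalle cylinder'' surgery used for real‑analytic parabolic arcs (as in \cite[Lemma 2.3]{HS}, \cite[\S 3]{MNS}, or the analogous step in \cite{NS}). One extends $T_s$ to the whole torus by $0$ outside the basins; because the construction is compatible with the involution $z\mapsto -z$ and with the lattice translations, the solution $\psi_s$ of the Beltrami equation (normalized using the fixed point and the symmetry) conjugates $\widetilde g_{\tau_0}$ to an antiholomorphic meromorphic map that again commutes with $\Lambda_{\tau(s)}$‑translations and with $z\mapsto -z$; hence it is $\widetilde g_{\tau(s)}$ for a unique $\tau(s)\in\mathbb H$. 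The map $\widetilde g_{\tau(s)}$ still has a double parabolic fixed point (parabolicity is preserved by quasiconformal conjugacy), so $\tau(s)\in\partial Y$, and by construction its critical Ecalle height equals $h_0+cs$ for a nonzero constant $c$. Real‑analytic dependence of $\tau(s)$ on $s$ follows from the real‑analytic dependence of the Beltrami coefficients on $s$ and the Ahlfors–Bers theorem; injectivity follows because distinct Ecalle heights give distinct conformal conjugacy classes, so the dynamics are conformally distinct while manifestly quasiconformally equivalent. Reparametrizing by $t=$ critical Ecalle height gives the arc $\tau(t)$, $t\in\mathbb R$, with $\tau_0=\tau(t_0)$ an interior point.

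It remains to see that this exhausts a full neighborhood of $\tau_0$ in $\partial Y$, i.e. that $\tau_0$ is genuinely an \emph{interior} point of the arc rather than an endpoint or an isolated piece. For this I would argue that the critical Ecalle height is a local homeomorphism from $\partial Y$ near $\tau_0$ onto a neighborhood of $h_0$ in $\mathbb R$: it is continuous (Fatou coordinates depend continuously on the parabolic parameter, cf.\ \cite[Theorem 8.2]{M1new} and its parabolic analogue) and, by the deformation above, surjective onto a neighborhood of $h_0$ with a real‑analytic section $t\mapsto\tau(t)$; combined with the fact that near a parabolic parameter the parameter space is locally one‑dimensional real‑analytic along $\partial Y$ (there are no ``higher multiplicity'' degenerations by the half‑period obstruction in Lemma \ref{indiff}), this forces the arc to be locally the whole of $\partial Y$ at $\tau_0$.

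The main obstacle I expect is the surgery bookkeeping needed to keep everything compatible with \emph{both} the lattice‑translation equivariance and the antiholomorphic involution $z\mapsto-z$ simultaneously, so that the deformed map is again of the form $\widetilde g_{\tau}$ for some $\tau\in\mathbb H$ (rather than a general torus endomorphism); in particular one must check that the two parabolic petals, which are exchanged by $-z$, receive deformations that are genuine mirror images, and that the normalization of the solution of the Beltrami equation can be chosen to respect the odd symmetry. This is exactly the point where the argument of \cite[Lemma 2.3]{HS} / \cite[Lemma 3.1]{MNS} has to be adapted to the Weierstrass‑$\zeta$ setting, and the rest is a routine transcription of the now‑standard parabolic‑arc machinery.
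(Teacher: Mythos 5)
Your proposal follows essentially the same route as the paper: a quasiconformal deformation supported on the Ecalle cylinders that shifts the critical Ecalle height, spread to the symmetric petal via $z\mapsto -z$ and integrated by the Measurable Riemann Mapping Theorem, with real-analyticity coming from the real-analytic dependence of the Beltrami forms and simplicity from the conformal invariance of the Ecalle height. Your final paragraph about the arc exhausting a neighborhood of $\tau_0$ in $\partial Y$ is not needed for the stated lemma (and the paper does not prove it here): since the deformation is defined for all $t\in\mathbb{R}$ with $\tau_0=\tau(0)$, the point $\tau_0$ is automatically an interior point of the constructed arc.
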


\begin{proof}
The situation is similar to that in \cite[Theorem 3.2]{MNS} (compare \cite[Proposition 2.5, Figure 2.6]{HS}), where a quasi-conformal deformation trick was used to prove the existence of the required arcs. We will follow the same scheme here.

By Lemma \ref{indiff}, $\widetilde{g}_{\tau_0}$ has a double parabolic fixed point such that both of its attracting petals are invariant under $\widetilde{g}_{\tau_0}$. Let $U$ be the immediate basin of attraction (of the parabolic fixed point) containing $c_{\tau_0}$. We choose our preferred Fatou coordinate $\Phi_{\tau_0}$ for $U$ such that it maps the equator (of $U$) to the real line and the image of the critical value $\widetilde{g}_{\tau_0}(c_{\tau_0})$ under $\Phi_{\tau_0}$ has real part $1/4$ within the Ecalle cylinder. Let the critical Ecalle height of $\widetilde{g}_{\tau_0}$ be $h$.

It is easy to change the complex structure within the Ecalle cylinder 
so that the critical Ecalle height becomes any assigned real value, 
for example via the quasi-conformal homeomorphism 
$$
\ell_t : (x,y)\mapsto \left\{\begin{array}{ll}
                    (x,y+2tx) & \mbox{if}\ 0\leq x\leq 1/4, \\
                    (x,y+t(1-2x)) & \mbox{if}\ 1/4\leq x\leq 1/2, \\ 
                    (x,y-t(2x-1)) & \mbox{if}\ 1/2\leq x\leq 3/4, \\ 
                    (x,y-2t(1-x)) & \mbox{if}\ 3/4\leq x\leq 1. 
                      \end{array}\right. 
$$
This homeomorphism $\ell_t$ commutes  with the map $I:z\mapsto\bar{z}+1/2$, 
hence the corresponding Beltrami form is invariant under the  map $I$. Note 
that $\ell_t(1/4,h) = (1/4,h+t/2)$. Translating the map $\ell_t$ by 
positive integers, we obtain a qc-map $\ell_t$ commuting with $I$ in a 
right half plane. 

By the coordinate change $z\mapsto \Phi_{\tau_0}(z)$, we can transport this Beltrami form from the right half plane into the domain of definition of $\Phi_{\tau_0}$ (an attracting petal in $U$), and it is forward invariant under $g_{\tau_0}$. We can use the involution $z\mapsto -z$ to spread this Beltrami form in the symmetric Fatou component $-U$ (this process is compatible with the dynamics since $\widetilde{g}_{\tau_0}$ commutes with the involution). It is easy to make it backward invariant by pulling it back along the dynamics. Extending it by the zero Beltrami form outside of the entire parabolic basin, we obtain a $\widetilde{g}_{\tau_0}$-invariant Beltrami form on the torus. The Measurable Riemann Mapping Theorem now supplies a qc-map $\phi_t$ integrating this Beltrami form. Note that $\phi_t$ is a quasiconformal homeomorphism of the topological torus. It follows that there is some $\tau(t)\in\mathbb{H}$ such that $\phi_t\circ\widetilde{g}_{\tau_0}\circ\phi_t^{-1}:\mathbb{C}/\Lambda_{\tau(t)}\rightarrow\mathbb{C}/\Lambda_{\tau(t)}$ is an antiholomorphic meromorphic function in the same family $\{\widetilde{g}_{\tau}\}_{\tau\in\mathbb{H}}$. Hence, $\phi_t\circ\widetilde{g}_{\tau_0}\circ\phi_t^{-1}=\widetilde{g}_{\tau(t)}$. Its attracting Fatou coordinate at the parabolic point is given by $\Phi_t = \ell_t\circ\Phi_{\tau_0}\circ\phi_t^{-1}$. Thus the critical Ecalle height of $\widetilde{g}_{\tau(t)}$ is $h+t/2$. 

Note that the Beltrami form depends real-analytically on $t$, so the 
parameter $\tau(t)$ depends real-analytically on $t$. We thus obtain a real
analytic map from $\mathbb{R}$ into our parameter space $\mathbb{H}$. Since the critical
values of all $\widetilde{g}_{\tau(t)}$ have  different Ecalle heights, which is a
conformal invariant, this map is injective. This proves the existence of an arc in the parameter plane with the required properties, and injectivity implies that the arc is simple.
\end{proof}

\begin{remark}
Such a real-analytic arc $\mathcal{C}$ consisting of parabolic parameters is called a \emph{parabolic arc}.
\end{remark}

\begin{corollary}
The non-hyperbolic locus $\partial Y$ is a union of parabolic arcs.
\end{corollary}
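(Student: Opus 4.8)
The plan is to obtain this corollary directly from Lemmas \ref{indiff} and \ref{ThmParaArcs}, once $\partial Y$ has been identified with the parabolic locus of the family. First I would recall that $\mathbb{H}=X\sqcup Y$ with $X$ closed, so $\mathbb{H}=\textrm{int}(X)\sqcup\partial X\sqcup Y$; since $\textrm{int}(X)\cup Y$ is exactly the hyperbolic locus and $\partial X=\partial Y$, the set $\partial Y$ is precisely the set of non-hyperbolic parameters. As observed in the discussion preceding Lemma \ref{indiff}, each such parameter has an indifferent fixed point, and because the holomorphic multiplier of a fixed point of an antiholomorphic map is real and nonnegative, an indifferent fixed point is necessarily parabolic with holomorphic multiplier $+1$. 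Hence $\partial Y$ coincides with the parabolic locus of $\{\widetilde{g}_\tau\}_{\tau\in\mathbb{H}}$.

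Next, fix $\tau_0\in\partial Y$. By Lemma \ref{indiff}, $\widetilde{g}_{\tau_0}$ has a double parabolic fixed point, and by Lemma \ref{ThmParaArcs} the parameter $\tau_0$ is an interior point of a simple real-analytic arc $\mathcal{C}_{\tau_0}$ consisting entirely of parabolic parameters, i.e. a parabolic arc in the sense of the remark above. Every parameter on $\mathcal{C}_{\tau_0}$ is therefore non-hyperbolic, hence lies in $\partial X=\partial Y$; that is, $\mathcal{C}_{\tau_0}\subseteq\partial Y$. Taking the union over all $\tau_0\in\partial Y$ then gives $\partial Y=\bigcup_{\tau_0\in\partial Y}\mathcal{C}_{\tau_0}$, which exhibits $\partial Y$ as a union of parabolic arcs.

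I do not expect a genuine obstacle here: the substance of the statement is already packaged inside Lemmas \ref{indiff} and \ref{ThmParaArcs}. The one point worth flagging — and it is handled within the proof of Lemma \ref{ThmParaArcs} via the Measurable Riemann Mapping Theorem — is that the quasiconformal deformation of $\widetilde{g}_{\tau_0}$ must stay inside the family $\{\widetilde{g}_\tau\}_{\tau\in\mathbb{H}}$, so that the arc really lives in our parameter space $\mathbb{H}$ and not merely in an abstract deformation space. I would also stress that this corollary makes no assertion about how the various arcs fit together (whether distinct arcs are disjoint, how they accumulate on $\partial\mathbb{H}$, or how they bound the hyperbolic components); that finer topological and combinatorial structure is precisely the content of Theorem \ref{hyperbolic}, to be established subsequently.
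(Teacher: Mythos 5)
Your argument is correct and is essentially the paper's (the corollary is stated without proof there precisely because it follows immediately from Lemma \ref{ThmParaArcs}): every $\tau_0\in\partial Y$ lies on a parabolic arc, and each such arc consists of parabolic, hence non-hyperbolic, parameters, so it is contained in $\partial X=\partial Y$. Your additional remarks (the deformation staying inside the family, and the corollary not addressing how the arcs fit together) are accurate but not needed beyond what Lemma \ref{ThmParaArcs} already provides.
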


\begin{lemma}\label{union_arcs}
Let $H$ be a hyperbolic component. Then $\partial H$ is a union of parabolic arcs. 
\end{lemma}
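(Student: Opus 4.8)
The plan is to establish the statement in three steps: (i) every point of $\partial H$ is a parabolic parameter; (ii) hence, by Lemma~\ref{ThmParaArcs}, it lies on a parabolic arc; (iii) every parabolic arc that meets $\partial H$ is contained in $\partial H$. Together these give $\partial H=\bigcup\{\mathcal{C}:\mathcal{C}\ \text{a parabolic arc with}\ \mathcal{C}\cap\partial H\neq\emptyset\}$, which is exactly the assertion.

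For (i) and (ii) I would argue purely topologically. Since $X$ is closed and $\partial X=\partial Y$ \cite{BeEr}, the hyperbolic locus $\mathrm{int}(X)\cup Y$ is open, so its connected component $H$ is open, and its boundary $\partial H$ (taken in $\mathbb{H}$) is disjoint from the hyperbolic locus: a point of $\partial H$ lying in some hyperbolic component $H'$ would force $H'\cap H\neq\emptyset$, hence $H'=H$ and $\partial H\cap H\neq\emptyset$, which is absurd. Therefore $\partial H\subseteq\partial X=\partial Y$; equivalently, $\partial H=\overline{H}\cap\partial Y$. Every parameter in $\partial Y$ carries a parabolic fixed point, so Lemma~\ref{ThmParaArcs} places each $\tau_0\in\partial H$ on a parabolic arc $\mathcal{C}$ (and the local normal form of Lemma~\ref{indiff}, together with the critical Ecalle height as a local real-analytic coordinate, shows that this arc through $\tau_0$ is unique).

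Step (iii) is the substantive one. Fix a parabolic arc $\mathcal{C}=\{\tau(t):t\in\mathbb{R}\}$ with $\mathcal{C}\cap\partial H\neq\emptyset$. Since $\mathcal{C}\subseteq\partial Y$ is disjoint from the open set $H$, proving $\mathcal{C}\subseteq\partial H$ is the same as proving $\mathcal{C}\subseteq\overline{H}$. The set $A:=\{t\in\mathbb{R}:\tau(t)\in\overline{H}\}$ is non-empty (as $\mathcal{C}\cap\partial H\subseteq\mathcal{C}\cap\overline{H}$) and closed (being the preimage of $\overline{H}$ under the continuous map $t\mapsto\tau(t)$), so by connectedness of $\mathbb{R}$ it suffices to show $A$ is open. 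For this I would use the parabolic-bifurcation argument of \cite[Theorem~3.2]{MNS} and \cite[Proposition~2.5]{HS}: given $t_1\in A$, pick $\sigma_k\in H$ with $\sigma_k\to\tau(t_1)$. For $k$ large, $\widetilde{g}_{\sigma_k}$ has an attracting cycle very close to the parabolic orbit of $\widetilde{g}_{\tau(t_1)}$, and its immediate basin contains a region biholomorphic to a tall stack of near-Ecalle cylinders. Performing on this region the same Ecalle-height quasiconformal deformation $\ell_t$ used in the proof of Lemma~\ref{ThmParaArcs}, and integrating the resulting invariant Beltrami form, yields parameters $\sigma_k(t)$ in the same family, depending real-analytically on $t$ with $\sigma_k(0)=\sigma_k$; these maps remain hyperbolic (the attracting cycle survives the quasiconformal conjugacy), so $t\mapsto\sigma_k(t)$ is a path in the hyperbolic locus issuing from $\sigma_k\in H$ and hence staying in $H$. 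Letting $k\to\infty$, the deformations converge (standard parabolic-implosion estimates) and $\sigma_k(t)\to\tau(t)$, so $\tau(t)\in\overline{H}$ for all $t$ near $t_1$. Thus $A$ is open, $A=\mathbb{R}$, and $\mathcal{C}\subseteq\overline{H}=\partial H$.

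The hard part is the openness of $A$, i.e. the parabolic-bifurcation step: one must make precise that the Ecalle-height deformation along $\mathcal{C}$ can be carried out uniformly for the nearby hyperbolic parameters $\sigma_k$, and that the deformed parameters converge to the corresponding points of $\mathcal{C}$. This is precisely the implosion machinery of \cite{HS,MNS}, and I would quote it rather than reproduce it; everything else reduces to soft point-set topology together with Lemmas~\ref{indiff} and \ref{ThmParaArcs}.
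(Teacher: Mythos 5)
Your steps (i) and (ii) coincide with the paper's: every point of $\partial H$ is non-hyperbolic, hence lies in $\partial X=\partial Y$, is a (double) parabolic parameter by Lemma~\ref{indiff}, and lies on a parabolic arc by Lemma~\ref{ThmParaArcs}. The divergence, and the problem, is in step (iii). There the openness of $A=\{t:\tau(t)\in\overline{H}\}$ is the entire mathematical content of the lemma in your formulation, and your argument for it is not a proof: it is a sketch of a parabolic perturbation/implosion argument that cannot simply be quoted from \cite{HS,MNS}. What those sources (and Lemma~\ref{ThmParaArcs}, which already encapsulates them) provide is the construction of the arc through a given \emph{parabolic} parameter; they do not provide the statement you actually need, namely that the Ecalle-height deformation can be performed uniformly on the nearby \emph{hyperbolic} maps $\widetilde{g}_{\sigma_k}$ (which have no Ecalle cylinders -- their basin quotients are tori, and the natural invariant being moved is the Koenigs ratio) and that the deformed parameters satisfy $\sigma_k(t)\to\tau(t)$. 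The last point is exactly the delicate one: uniformly bounded dilatation only gives subsequential limits of $\sigma_k(t)$ lying in $\overline{H}$, which is vacuous for your purpose; identifying the limit as $\tau(t)$ requires genuine control of approximate Fatou coordinates as the attracting fixed points degenerate, i.e.\ the hard analysis you explicitly decline to carry out. As written, step (iii) is a gap.

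The paper closes this step by a soft argument instead: by Lemma~\ref{indiff} no parabolic fixed point can have four petals, so two distinct parabolic arcs cannot meet at a point of $\mathbb{H}$; combining this disjointness with the connectedness of $\partial H$, the paper concludes that any parabolic arc meeting $\partial H$ is contained in $\partial H$. No perturbation of hyperbolic parameters, no convergence estimates. So either carry out your implosion argument in full (a substantial undertaking, and more than this lemma needs), or replace step (iii) by the multiplicity/disjointness argument: the point of Lemma~\ref{indiff} is precisely that it forbids the higher-multiplicity parabolic points at which an arc could terminate inside $\partial Y$ or at which two arcs could cross, and that is what lets one propagate membership in $\partial H$ along an arc without any analytic machinery.
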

\begin{proof}
 By Lemma \ref{indiff}, $\partial H$ consists of double parabolic parameters. By Theorem \ref{ThmParaArcs}, each parameter on $\partial H$ lies on a parabolic arc, and all the parameters on a given parabolic arc are quasiconformally conjugate. These double parabolic parameters are formed by the merger of two attracting fixed points and a repelling fixed point $p_\tau^i$, where $i\in\{1,2,3\}$ is constant along a given parabolic arc. If two such parabolic arcs meet at a point in $\mathbb{H}$, then the resulting parabolic parameter would have four attracting petals (it would have a parabolic fixed point of multiplicity $5$), which contradicts Lemma \ref{indiff}. Therefore, two parabolic arcs do not intersect. Since $\partial H$ is connected, it follows that any parabolic arc that intersects $\partial H$ must, in fact, be contained in $\partial H$. We can now conclude that $\partial H$ is a union of parabolic arcs.
\end{proof}

\begin{proof}[Proof of Theorem \ref{hyperbolic}]
Let $H$ be a hyperbolic component. We have already seen that $H$ is simply connected (Corollary \ref{simply_connected}), and $\partial H$ is a union of parabolic arcs (Lemma \ref{union_arcs}). We will now proceed to counting the number of parabolic arcs on $\partial H$.

\noindent \textbf{Case 1: \boldmath{$H\subset\textrm{int}(X)$}.}\quad
We need to understand the dynamics of parabolic parameters as limiting dynamics of hyperbolic parameters in $H$. This can be conveniently achieved by looking at the holomorphic second iterate $g_\tau^{\circ 2}$, where $\tau\in H$. Since the degree of $g_\tau^{\circ 2}$ restricted to an immediate basin of attraction is $3^2=9$, it must have eight fixed points on the boundary of the immediate basin. Four of these are fixed points of $g_\tau$ (note that these are all half-periods, and they project to only two distinct fixed points of $\widetilde{g}_\tau$ on the torus). It follows that the rest of the four fixed points of $g_\tau^{\circ 2}$ come from a pair of $2$-cycles of $g_\tau$ (this holds for $\widetilde{g}_\tau$ as well). Let us denote these two $2$-cycles of $\widetilde{g}_\tau$ by $\{q_\tau^1, \widetilde{g}_\tau(q_\tau^1)\}$ and $\{q_\tau^2, \widetilde{g}_\tau(q_\tau^2)\}$. As $\tau$ approaches a parabolic arc on $\partial H$, one of these two $2$-cycles of $\widetilde{g}_\tau$ merge with the unique attracting fixed point giving rise to a double parabolic fixed point. Recall that all maps on a given parabolic arc are quasiconformally conjugate. Hence, given any parabolic arc $\mathcal{C}\subset\partial H$, there is a fixed $i\in\{1,2\}$ such that as $\tau$ approaches $\mathcal{C}$, the $2$-cycle $\{q_\tau^i, \widetilde{g}_\tau(q_\tau^i)\}$ of $\widetilde{g}_\tau$ merges with its attracting fixed point to produce a double parabolic fixed point. This discussion shows that there are two distinct ways in which a double parabolic parameter can be formed on $\partial H$. It follows that there are two parabolic arcs $\mathcal{C}_1$ and $\mathcal{C}_2$ on $\partial H$ satisfying the property that the double parabolic fixed point of every $\widetilde{g}_{\tau}$ on $\mathcal{C}_i$ (where $i\in\{1,2\}$) is formed by the merger of an attracting fixed point and the repelling $2$-cycle $\{q_\tau^i, \widetilde{g}_\tau(q_\tau^i)\}$.

\begin{figure}[ht!]
\includegraphics[scale=0.282]{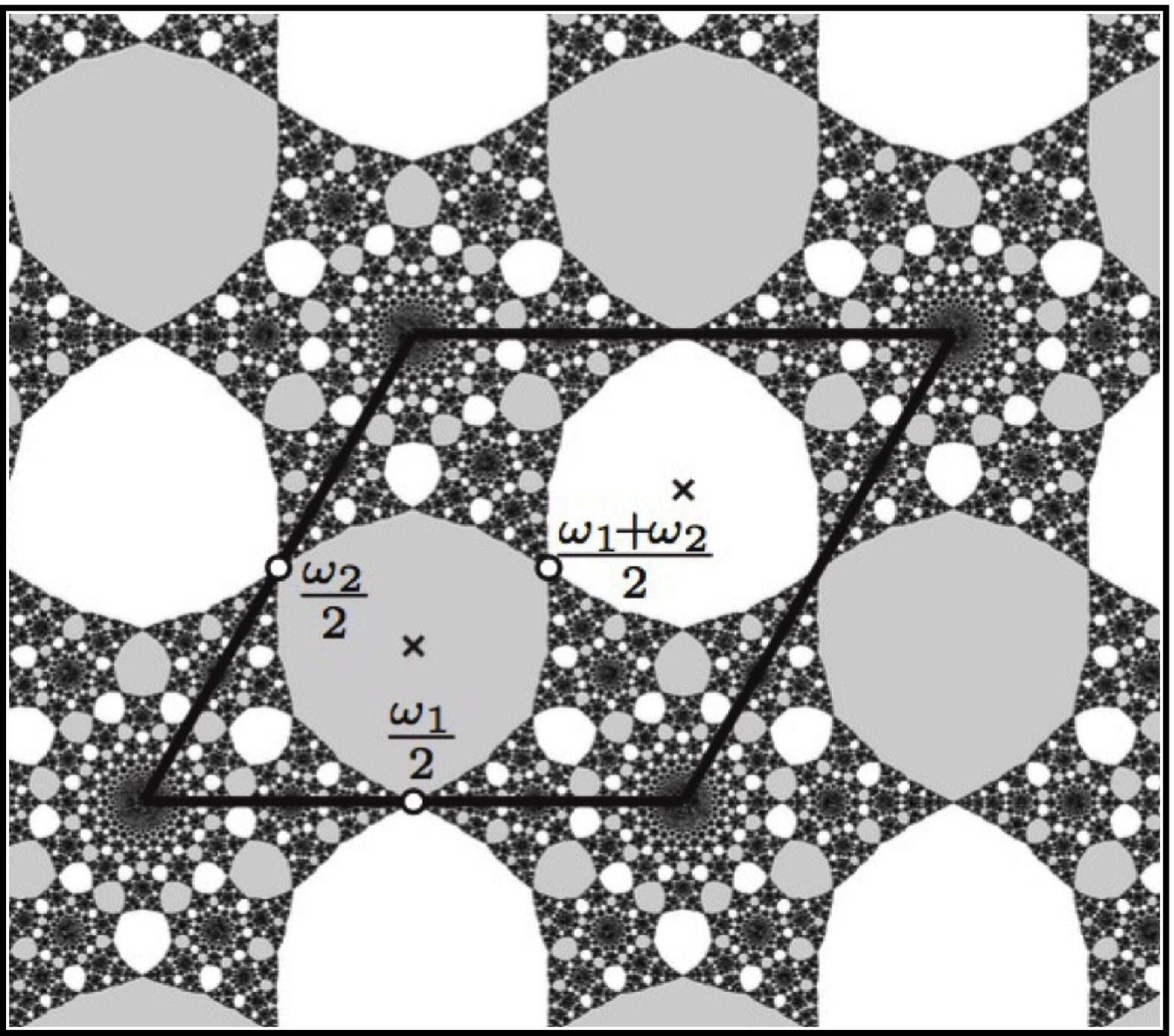}\ \includegraphics[scale=0.342]{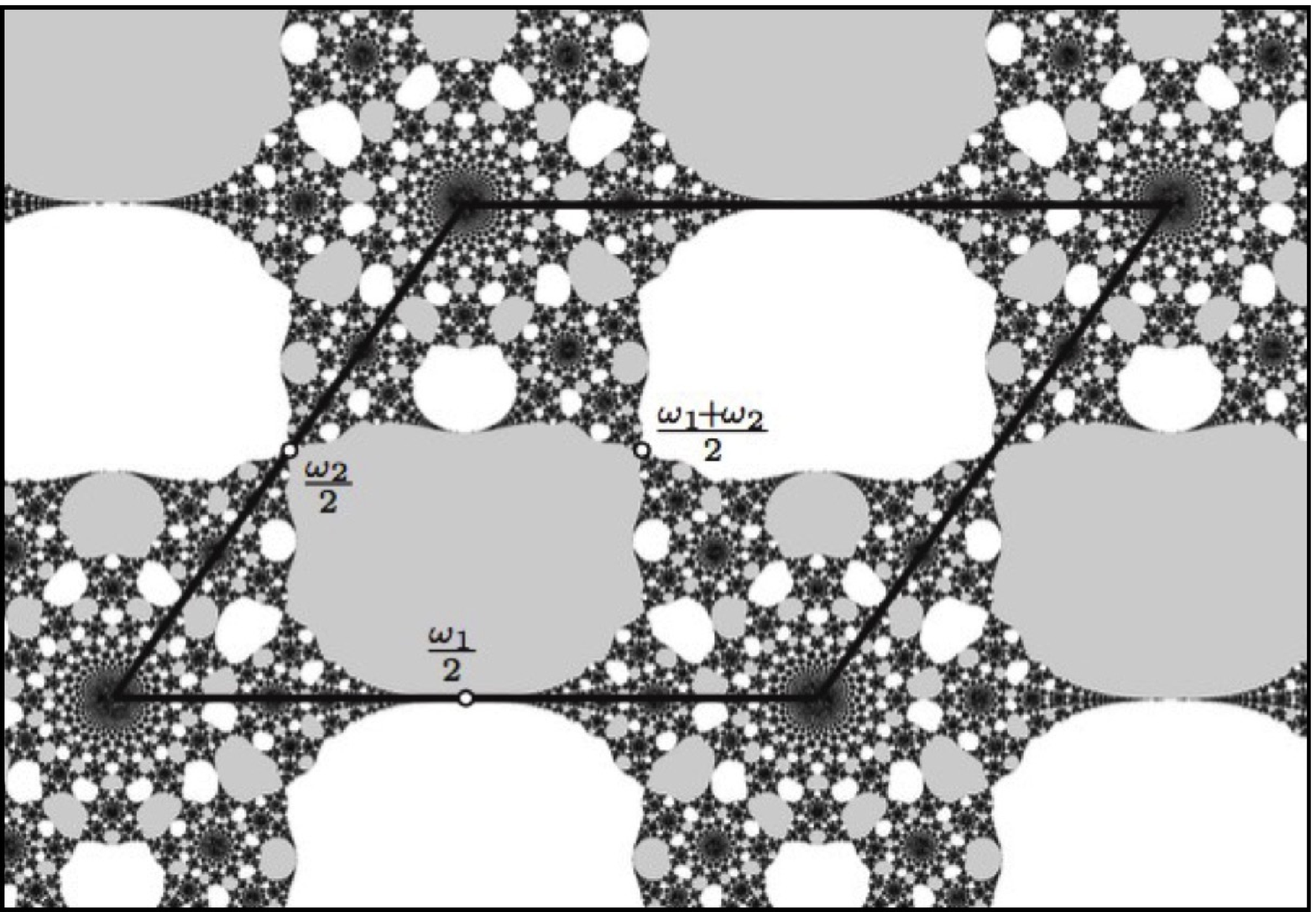}
\caption{Left: A fundamental domain in the dynamical plane of a map $g_\tau$, where $\tau$ belongs to a hyperbolic component contained in $Y$. The two attracting fixed points in the fundamental domain are marked by crosses, and the three repelling fixed points (which are half-periods of the lattice) are circled. As one moves closer to the boundary of the hyperbolic component, the two attracting points tend to coalesce with a repelling fixed point. Right:  A fundamental domain in the dynamical plane of a map $g_\tau$, where $\tau$ belongs to the boundary of a hyperbolic component. The parabolic fixed point, which is at $\frac{\omega_1+\omega_2}{2}$, is formed by the merger of two attracting fixed points and a repelling fixed point. The other two fixed points at $\frac{\omega_1}{2}$ and $\frac{\omega_2}{2}$ are repelling. (Figure courtesy Walter Bergweiler and Alexandre Eremenko.)}
\label{merger_pic}
\end{figure}

\noindent \textbf{Case 2: \boldmath{$H\subset Y$}.}\quad
The idea of the proof is similar to that of Case 1 (also compare \cite[Theorem 1.2]{MNS}). As mentioned earlier, for every $\tau\in H$, there are three (repelling) fixed points $\{p_\tau^1, p_\tau^2, p_\tau^3\}$ on the boundary of the immediate basin of attraction $U$ (containing the critical point $c_\tau$) of $\widetilde{g}_{\tau}$. By Lemma \ref{union_arcs}, $\partial H$ is a union of parabolic arcs. For every fixed parabolic arc $\mathcal{C}$ on $\partial H$, there exists $i\in\{1,2,3\}$ such that as $\tau$ approaches $\mathcal{C}$, the attracting fixed points of $\widetilde{g}_{\tau}$ merge with the repelling fixed point $p_\tau^i$ (compare Figure \ref{merger_pic}). Since there are three choices for the boundary fixed point $p_\tau^i$ with which the attracting points can merge (i.e. there are three distinct ways in which a parabolic point can be born), it follows that there are three parabolic arcs $\mathcal{C}_1$, $\mathcal{C}_2$, and $\mathcal{C}_3$ on $\partial H$ satisfying the property that the parabolic fixed point of any $\widetilde{g}_{\tau}$ on $\mathcal{C}_i$ (where $i\in\{1,2,3\}$) is formed by the merger of two attracting fixed points and the repelling fixed point $p_\tau^i$. 

Finally, $\partial H$ is connected, and each $\mathcal{C}_i$ is an open simple arc. It follows that any accumulation point of $\mathcal{C}_i (\subset\partial H)$ in $\mathbb{H}$ must also be an accumulation point of some $\mathcal{C}_j (\subset\partial H)$ with $j\neq i$. But such an accumulation point has to be a parabolic parameter (on $\partial H$) with multiplicity $4$, which contradicts Lemma \ref{indiff}. Hence no parabolic arc on $\partial H$ has an accumulation point in $\mathbb{H}$; i.e. $\left(\overline{\mathcal{C}_i}\setminus\mathcal{C}_i\right)\cap\mathbb{H}=\emptyset$ (here, $\overline{\mathcal{C}_i}$ denotes the topological closure of $\mathcal{C}_i$ in the complex plane $\mathbb{C}$). In other words, they stretch out to the boundary of $\mathbb{H}$ in both directions. Unboundedness of $H$ readily follows. Since every hyperbolic component can be parametrized by the Koenigs ratio function, and each parabolic arc can be parametrized by Ecalle height, the last statement of the theorem is clear. 
\end{proof}

The proof of Theorem \ref{critical_analytic} is now straightforward.

\begin{proof}[Proof of Theorem \ref{critical_analytic}]
For every parameter $\tau$ in the hyperbolic locus $\mathrm{int}(X)\cup Y$, the fixed points of $\widetilde{g}_\tau$ are either attracting or repelling. An easy application of the implicit function theorem shows that such a fixed point can be locally followed as a real-analytic function of the parameter (compare \cite[Lemma 2.2]{S1a}). On the other hand, by Lemma \ref{ThmParaArcs}, variation of critical Ecalle height yields a quasiconformal conjugacy between any two maps on a given parabolic arc, and these conjugacies depend real-analytically on the parameter $\tau$. Hence, the fixed points of $\widetilde{g}_\tau$ are real-analytic functions of the parameter on $\partial X$.

Since the fixed points of $\widetilde{g}_\tau$ are precisely the critical points of the Green's function on the flat torus $\mathbb{C}/\Lambda_\tau$, the critical points of the Green's function turn out to be real-analytic functions of the parameter $\tau$ on the hyperbolic locus $\mathrm{int}(X)\cup Y$ and on its complement $\partial X$ (the parabolic locus) separately.
\end{proof}

We conclude with the landing behavior of internal rays of hyperbolic components.

\begin{definition}[Internal Rays of Hyperbolic Components]
An internal ray of a hyperbolic component $H$ is an arc $\gamma\subset H$ starting at
the center such that there is an angle $\theta$ with $\rho_H(\gamma)=\{re^{2\pi i\theta}: r\in[0,1)\}$.
\end{definition}

\begin{remark}
Since $\rho_H$ is a many-to-one map, an internal ray of $H$ with a given
angle is not uniquely defined. In fact, if $H\subset\textrm{int}(X)$, there are two internal rays with any given angle $\theta$, and if $H\subset Y$, there are three internal rays with any given angle $\theta$.
\end{remark}

\begin{lemma}
The internal rays at angle $0$ land at the critical Ecalle height 0 parameters on $\partial H$ (one ray on each
parabolic arc). All other internal rays stretch out to the boundary of $\mathbb{H}$.
\end{lemma}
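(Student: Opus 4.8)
The plan is to combine the model picture of $\rho_H$ from Lemma~\ref{hyp_Blaschke} with a parabolic-bifurcation analysis of $\rho_H$ near $\partial H$. First I would record the structure of internal rays. By Lemma~\ref{hyp_Blaschke} and Corollary~\ref{simply_connected}, $\rho_H\colon H\to\mathbb{D}$ is a real-analytic branched cover of degree $d$ (with $d=2$ if $H\subset\mathrm{int}(X)$ and $d=3$ if $H\subset Y$), ramified only over the unique center $\tau_*$ over $0$; so $(H,\rho_H)$ is topologically $(\mathbb{D},w\mapsto w^d)$, and the number of parabolic arcs on $\partial H$ equals $d$ as well. Hence for each angle $\theta$ there are exactly $d$ internal rays, all issuing from $\tau_*$, pairwise disjoint off $\tau_*$, each a real-analytic arc along which $|\rho_H|$ increases strictly to $1$; and since $|\rho_H(\tau)|\to1$ as $\tau\to\partial H$, each internal ray $\gamma$ has a nonempty connected limit set $L(\gamma)\subset\partial H\cup\partial\mathbb{H}$, where $\partial\mathbb{H}:=\mathbb{R}\cup\{\infty\}$ is the ideal boundary. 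Writing $\partial H=\mathcal{C}_1\sqcup\cdots\sqcup\mathcal{C}_d$ (disjoint parabolic arcs), connectedness forces $L(\gamma)\cap\mathbb{H}$ into a single $\mathcal{C}_i$.

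Next I would compute $\rho_H$ in adapted coordinates. For $\tau\in H$ put $\mu_\tau=(\widetilde{g}_\tau^{\circ 2})'(z_\tau)\in(0,1)$ at the attracting fixed point $z_\tau$ whose basin contains $c_\tau$, and normalize the Koenigs coordinate $\kappa_\tau$ of $\widetilde{g}_\tau^{\circ 2}$ so that $\kappa_\tau(\widetilde{g}_\tau(z))=\sqrt{\mu_\tau}\,\overline{\kappa_\tau(z)}$; this determines $\kappa_\tau$ up to a real factor and hence does not change $\rho_H(\tau)=\kappa_\tau(\widetilde{g}_\tau(c_\tau))/\kappa_\tau(c_\tau)$. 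With $\widehat{\Phi}_\tau:=\log\kappa_\tau/\log\mu_\tau$, which conjugates $\widetilde{g}_\tau^{\circ 2}$ to $w\mapsto w+1$ and satisfies $\widehat{\Phi}_\tau(\widetilde{g}_\tau(z))=\overline{\widehat{\Phi}_\tau(z)}+\tfrac12$, one obtains
\begin{equation*}
\rho_H(\tau)=\exp\!\Bigl(\log\mu_\tau\cdot\bigl(\widehat{\Phi}_\tau(\widetilde{g}_\tau(c_\tau))-\widehat{\Phi}_\tau(c_\tau)\bigr)\Bigr)=\exp\!\Bigl(\log\mu_\tau\cdot\bigl(\tfrac12-2i\,\mathrm{Im}\,\widehat{\Phi}_\tau(c_\tau)\bigr)\Bigr),
\end{equation*}
so $|\rho_H(\tau)|=\sqrt{\mu_\tau}$ and $\arg\rho_H(\tau)\equiv-2\log\mu_\tau\cdot\mathrm{Im}\,\widehat{\Phi}_\tau(c_\tau)\pmod{2\pi}$. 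Now I would invoke parabolic-bifurcation theory (convergence of rescaled Koenigs coordinates to the attracting Fatou coordinate, in the antiholomorphic version of \cite[\S 2]{HS} and \cite[\S 3]{MNS}): if $\tau_0=\tau(t_0)$ is the parameter of critical Ecalle height $t_0$ on an arc $\mathcal{C}_i$, then as $\tau\to\tau_0$ in $H$ the coordinates $\widehat{\Phi}_\tau$ converge, on compacta of the limiting petal and extended by the dynamics, to the preferred Fatou coordinate $\Phi_{\tau_0}$ of $\widetilde{g}_{\tau_0}$ up to an additive constant whose imaginary part tends to $0$; in particular $\mathrm{Im}\,\widehat{\Phi}_\tau(c_\tau)$ stays bounded and converges to $\mathrm{Im}\,\Phi_{\tau_0}(c_{\tau_0})=-t_0$ (the last equality being $\mathrm{Im}$ of $\Phi_{\tau_0}(\widetilde{g}_{\tau_0}(c_{\tau_0}))=\overline{\Phi_{\tau_0}(c_{\tau_0})}+\tfrac12$). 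Plugging this into the displayed formula and using $\log\mu_\tau\to0$ yields $\rho_H(\tau)\to1$; thus $\rho_H$ extends continuously to $\overline{H}\cap\mathbb{H}$ with $\rho_H|_{\partial H}\equiv1$. Consequently an internal ray at angle $\theta\neq0$ has $\arg\rho_H\equiv2\pi\theta\neq0$, so its limit set cannot meet $\partial H$ and therefore lies in $\partial\mathbb{H}$: it stretches out to the boundary of $\mathbb{H}$.

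For the angle-$0$ rays I would exploit the same estimate quantitatively. For $\tau\in H$ near $\tau(t)\in\mathcal{C}_i$ the formula and $\mathrm{Im}\,\widehat{\Phi}_\tau(c_\tau)\to-t$ give $\arg\rho_H(\tau)=2t\log\mu_\tau\,(1+o(1))$, which for $t\neq0$ and $\tau$ close enough is a small \emph{nonzero} number (of sign $-\mathrm{sign}(t)$, since $\log\mu_\tau<0$). Hence the angle-$0$ locus $\rho_H^{-1}((0,1))\subset H$ avoids a neighborhood of $\tau(t)$ for every $t\neq0$; so the limit set in $\mathbb{H}$ of each angle-$0$ ray can only contain the $d$ critical-Ecalle-height-$0$ parameters $\tau_1^0\in\mathcal{C}_1,\dots,\tau_d^0\in\mathcal{C}_d$, and since these are isolated in $\partial H$ while $L(\gamma)\cap\mathbb{H}$ is connected, each angle-$0$ ray either escapes to $\partial\mathbb{H}$ or lands at a single $\tau_i^0$. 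Conversely, the estimate shows $\arg\rho_H$ takes small values of opposite sign for $\tau\in H$ near $\tau(\varepsilon)$ and near $\tau(-\varepsilon)$ (small $\varepsilon>0$), so by the intermediate value theorem along a short arc in $H$ crossing $\tau_i^0$ the angle-$0$ locus gets arbitrarily close to $\tau_i^0$; therefore some angle-$0$ ray lands at $\tau_i^0$. As there are exactly $d$ angle-$0$ rays and $d$ points $\tau_i^0$, a pigeonhole count forces exactly one angle-$0$ ray onto each parabolic arc, with none escaping. Together with the previous paragraph this proves the lemma.

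The main obstacle is the parabolic-implosion input used above — the convergence of the rescaled Koenigs coordinates $\widehat{\Phi}_\tau$ to the parabolic Fatou coordinate $\Phi_{\tau_0}$, and in particular the control on the additive-constant ambiguity that yields $\mathrm{Im}\,\widehat{\Phi}_\tau(c_\tau)\to-t_0$ (equivalently, continuity of the attracting Ecalle height of the critical point up to $\partial H$). Here the reflection relation $\Phi(\widetilde{g}(z))=\overline{\Phi(z)}+\tfrac12$ of the preferred Fatou coordinate is exactly what pins the ambiguity down to a real quantity so that imaginary parts pass to the limit; once that and the continuity statement are available, the rest is bookkeeping with the covering $(H,\rho_H)\cong(\mathbb{D},w\mapsto w^d)$ and the sign of $\arg\rho_H$ near the parabolic arcs.
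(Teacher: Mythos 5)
Your overall route is the one the paper delegates to the adaptation of \cite[Lemma 6.2]{IM2}, and much of it is sound: the normalization $\kappa_\tau(\widetilde{g}_\tau(z))=\sqrt{\mu_\tau}\,\overline{\kappa_\tau(z)}$ is legitimate, the identity $\arg\rho_H(\tau)\equiv -2\log\mu_\tau\cdot\mathrm{Im}\,\widehat{\Phi}_\tau(c_\tau)\pmod{2\pi}$ with $|\rho_H(\tau)|=\sqrt{\mu_\tau}$ is correct, and, \emph{granting} the parabolic-limit input $\mathrm{Im}\,\widehat{\Phi}_\tau(c_\tau)\to -t_0$, your argument for nonzero angles coincides with the paper's (no accumulation on $\partial H$, hence escape to $\partial\mathbb{H}$). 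That limit statement is not proved by you, but it is precisely the content of the cited lemma (and the radial approach $\mu_\tau\in(0,1)$, automatic in the antiholomorphic setting, is what makes it work), so assuming it is consistent with the paper's level of detail. A small slip: connectedness of the limit set does not force $L(\gamma)\cap\mathbb{H}$ into a single arc, nor is $L(\gamma)\cap\mathbb{H}$ connected in general; what is true, and what your dichotomy actually needs, is that $L(\gamma)$ is connected in the sphere, so if it contains a point of $\mathbb{H}$ that is isolated in $L(\gamma)$ (as any $\tau_i^0$ would be, once $L(\gamma)\cap\mathbb{H}$ is known to be finite), then $L(\gamma)$ is that single point.

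The genuine gap is in the existence half of the angle-$0$ statement. Your intermediate-value argument needs a path \emph{inside $H$}, staying in a small neighborhood of the sub-arc $\tau([-\varepsilon,\varepsilon])\subset\mathcal{C}_i$, joining a parameter of $H$ near $\tau(\varepsilon)$ to one near $\tau(-\varepsilon)$. This presupposes that $H$ contains a one-sided collar of compact sub-arcs of $\mathcal{C}_i$ (or at least that $H$ intersected with such a neighborhood is connected along the arc). Nothing in your proposal, and nothing explicitly established in this paper, gives that: a priori $H$ could approach $\mathcal{C}_i$ through disjoint ``fingers'', and then the sign change of $\arg\rho_H$ at the two endpoints produces no angle-$0$ parameter near $\tau_i^0$; in that case all $d$ angle-$0$ rays could still escape to $\partial\mathbb{H}$ and the first assertion of the lemma (landing, one ray per parabolic arc) would remain unproved. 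This is exactly the point where the multicorn argument leans on compactness of hyperbolic components, which fails here because $H$ is unbounded, so the existence part needs an extra input — for instance the local bifurcation picture transverse to a parabolic arc (the squared multiplier of the persistent half-period fixed point is a real-valued real-analytic function of $\tau$, equal to $1$ exactly on the arc locally and with $H$ on a definite side, together with a nondegeneracy statement), or the accumulation-set argument of \cite{IM2} adapted to control the ends of the arcs. As written, the pigeonhole conclusion ``one ray on each arc, none escaping'' rests on this unjustified connectivity of $H$ near the arc.
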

\begin{proof}
The proof is similar to that of \cite[Lemma 6.2]{IM2}. The proof given there can be adapted to show that the internal rays at angle $0$ land at the Ecalle height $0$ parameters, and no internal ray at a non-zero angle can accumulate on a parabolic arc. Any accumulation point (in $\mathbb{C}$) of an internal ray lies either on $\partial H$ (the boundary of $H$ in the upper half plane), or on the boundary of the upper half plane. But $\partial H$ is a finite union of parabolic arcs, so it follows that the internal rays at non-zero angles do not have any accumulation point in $\partial H$. Hence they must accumulate on the boundary of the upper half plane. This completes the proof of the lemma.
\end{proof}

\bibliographystyle{alpha}
\bibliography{torus}
 
\end{document}